\documentclass[hidelinks,onefignum,onetabnum]{siamart220329}

\makeatletter
\let\siam@class@refstepcounter\refstepcounter
\def\refstepcounter{\@ifnextchar[{\cref@patched@refstepcounter}{\siam@class@refstepcounter}}
\def\cref@patched@refstepcounter[#1]#2{\siam@class@refstepcounter{#2}}
\providecommand\cref@override@label@type[2]{}
\makeatother


\newcommand{\poploss}{F_{\D}}
\newcommand{\xsimplex}{\Delta_x}
\newcommand{\ysimplex}{\Delta_y}
\newcommand{\lips}{L_0}
\newcommand{\smoo}{L_1}

\newcommand{\X}{\mathcal{X}}
\newcommand{\Y}{\mathcal{Y}}
\newcommand{\T}{\mathcal{T}}

\newcommand{\D}{\mathcal{D}}
\newcommand{\Z}{\mathcal{Z}}
\newcommand{\A}{\mathcal{A}}
\newcommand{\F}{\mathcal{F}}
\newcommand{\RR}{\mathbb{R}}

\newcommand{\gap}{\operatorname{Gap}}
\newcommand{\lse}{\operatorname{LSE}}
\newcommand{\PP}{\mathbb{P}}

\usepackage{multirow}
\usepackage{bbm}
\usepackage{amssymb}
\usepackage{lipsum}
\usepackage{amsfonts}
\usepackage{algorithmic}
\ifpdf
  \DeclareGraphicsExtensions{.eps,.pdf,.png,.jpg}
\else
  \DeclareGraphicsExtensions{.eps}
\fi


\newsiamremark{remark}{Remark}
\newsiamremark{hypothesis}{Hypothesis}
\crefname{hypothesis}{Hypothesis}{Hypotheses}
\newsiamthm{claim}{Claim}

\allowdisplaybreaks

\headers{Mirror Descent for DP Stochastic Saddle-Points}{T. Gonz\'alez, C. Guzm\'an, and C. Paquette}

\title{Mirror Descent Algorithms with Nearly Dimension-Independent Rates for Differentially- Private Stochastic Saddle-Point Problems}


\author{Tom\'as Gonz\'alez\thanks{Machine Learning Department, Carnegie Mellon University 
  (\email{tcgonzal@cs.cmu.edu}).}
\and Crist\'obal Guzm\'an\thanks{Institute for Mathematical and Computational Engineering, Faculty of Mathematics and School of Engineering,
 Pontificia Universidad Cat\'olica de Chile and
 Google Research 
  (\email{crguzmanp@mat.uc.cl}
  ).}
\and Courtney Paquette\thanks{Department of Mathematics and Statistics, McGill University and Google DeepMind\\(\email{courtney.paquette@mcgill.ca}).}}

\begin{document}

\maketitle

\begin{abstract}
    We study the problem of differentially-private (DP) stochastic (convex-concave) saddle-points in the $\ell_1$ setting. We propose $(\varepsilon, \delta)$-DP algorithms based on stochastic mirror descent that attain nearly dimension-independent convergence rates for the expected duality gap, a type of guarantee that was known before only for bilinear objectives. For convex-concave and first-order-smooth stochastic objectives, our algorithms attain a rate of $\sqrt{\log(d)/n} + (\log(d)^{3/2}/[n\varepsilon])^{1/3}$, where $d$ is the dimension of the problem and $n$ the dataset size. Under an additional second-order-smoothness assumption, we show that the duality gap is bounded by $\sqrt{\log(d)/n} + \log(d)/\sqrt{n\varepsilon}$ with high probability, by using bias-reduced gradient estimators. This rate provides evidence of the near-optimality of our approach, since a lower bound of $\sqrt{\log(d)/n} + \log(d)^{3/4}/\sqrt{n\varepsilon}$ exists. Finally, we show that combining our methods with acceleration techniques from online learning leads to the first algorithm for DP Stochastic Convex Optimization in the $\ell_1$ setting that is not based on Frank-Wolfe methods. For convex and first-order-smooth stochastic objectives, our algorithms attain an excess risk of $\sqrt{\log(d)/n} + \log(d)^{7/10}/[n\varepsilon]^{2/5}$, and when additionally assuming second-order-smoothness, we improve the rate to $\sqrt{\log(d)/n} + \log(d)/\sqrt{n\varepsilon}$. Instrumental to all of these results are various extensions of the classical Maurey Sparsification Lemma \cite{Pisier:1980}, which may be of independent interest.
\end{abstract}

\begin{keywords}
Differential Privacy, Stochastic Saddle Point Problem, Mirror Descent, Sparse Approximation
\end{keywords}

\begin{MSCcodes}
90C47, 90C15, 62D99, 68P27
\end{MSCcodes}

\section{Introduction}\label{sec:intro}

In this work, we study the \textit{stochastic saddle-point (SSP) problem}, that is, we aim to find a pair $(x^*,y^*)$ 
that solves 
\begin{equation} \label{eq:SSP}
    \min_{x \in \X} \max_{y \in \Y} \{ F_\D(x,y) := \mathbb{E}_{z \sim \D}[f(x,y;z)]\},
\end{equation}
where the distribution $\D$ over the sample space $\Z$ is unknown, and only iid data from the target distribution, $S \sim \D^n$, are available. We assume that, for each $z \in \Z$, the function $(x,y) \to f(x,y;z)$ is convex-concave, and the function is $\lips$-Lipschitz and $\smoo$-smooth w.r.t~$\|\cdot\|_1$ (see Section~\ref{sec: prelims} for details). 
We also assume that the feasible sets $\X$ and $\Y$ are the standard simplices in $\RR^{d_x}$ and $\RR^{d_y}$, denoted by $\Delta_x$ and $\Delta_y$ from now on\footnote{Note that we can reduce SSP problems over polytope feasible sets to this setting. In this case, the dimensions $d_x$ and $d_y$ will correspond to the number of vertices of $\X$ and $\Y$, respectively.}. We will refer to SSP problems satisfying these assumptions as the 
`$\ell_1$ setting' \cite{JNLS:2009,NesterovNemirovski:2013}. See Section \ref{sec:examples} for important problems where these assumptions hold. 

In modern machine learning applications, the observed dataset $S$ usually contains sensitive user information. In this context, Differential Privacy (DP) has become the de facto standard for privacy-preserving algorithms. A randomized algorithm $\mathcal{A}(S)$ is $(\varepsilon,\delta)$\textit{-differentially private} if for any pair of datasets $S$ and $S'$ differing in at most one data point and any event $\mathcal{E}$ in the output space, 
\[\mathbb{P}[\mathcal{A}(S)\in\mathcal{E}] \leq e^{\varepsilon}\mathbb{P}[\mathcal{A}(S')\in \mathcal{E}] + \delta.\] 

We study the Differentially Private Stochastic Saddle Point (DP-SSP) problem in the $\ell_1$ setting. More concretely, we provide differentially private algorithms that receive $S$ as input and return a pair $(x_{\A(S)},y_{\A(S)})$ that approximates an SSP. Given a pair $(x,y)$, we measure its efficiency by the duality gap, defined as 
\begin{equation} \label{eq:duality_gap}
    \operatorname{Gap}(x,y) = \max_{v \in \mathcal{X}, w \in \mathcal{Y}}(\poploss(x,w) - \poploss(v,y)).
\end{equation}

There is a major discrepancy in the literature between the achievable rates of convergence for the duality gap between some structured problems and more general convex-concave settings
. While some bilinear problems have convergences rates 
which scale only poly-logarithmically with the ambient dimensions 

(e.g.,
~\cite{Hsu:2013}), all known rates for general convex-concave objectives scale polynomially with the dimension (e.g.,~\cite{pmlr-v195-bassily23a}). This can be attributed to the fact that all existing results for convex-concave settings work under Euclidean geometry, which assumes general convex and compact feasible sets and objectives that are Lipschitz and smooth with respect to $\|\cdot\|_2$, as opposed to the $\ell_1$ setting considered in this work. Therefore, known polynomial dimension lower bounds for DP Stochastic Convex Optimization (SCO) in the Euclidean setting \cite{Bassily:2014} apply.
Recently, \cite{asi2021private,bassily2021differentially} showed that it is possible to obtain excess risk bounds for DP-SCO under $\ell_1$ setting which are nearly independent of the dimension. Inspired by these recent developments, in this work we provide DP algorithms with nearly dimension-independent rates for convex-concave SSP problems in the $\ell_1$ setting. Our results extend considerably the class of SSP problems where it is possible to bypass polynomial in the dimension lower bounds, which so far only contain problems with bilinear objectives and simplex feasible sets. 

\subsection{Summary of Contributions} We prove the first nearly-dimension independent rates for convex-concave DP-SSP in the $\ell_1$ setting. These rates are optimal (up to logarithmic factors) for second-order-smooth objectives, and have the form $r_{NDP} + r_{DP}$ where $r_{NDP}$ is the optimal non-private SSP rate  and $r_{DP}$ is an error rate due to differential privacy. Furthemore, the algorithms achieving these rates have an $O(n)$ gradient complexity. We also use our techniques to show nearly-dimension independent convergence rates for DP-SCO under the same assumptions considered in \cite{asi2021private,bassily2021differentially}. Previous work used private versions of the variance-reduced stochastic Frank-Wolfe algorithm. By contrast, we are the first to achieve this using variants of mirror-descent algorithms. 
Table \ref{tab:results_summary} summarizes the obtained rates. 

\begin{itemize}
    
    \item 
    We propose two algorithms for DP-SSP, based on (entropic) Stochastic Mirror Descent (SMD). As opposed to previous results for bilinear settings where privatization by sampling from vertices leads to unbiased gradient estimators \cite{Hsu:2013}, the main challenge we encounter is controlling the bias in the more general (stochastic) convex-concave setting. The first algorithm (Section \ref{subsec:DPSSPs_nobiasred}) controls bias by simply using a large number of samples in every iteration. 
    The second algorithm
    (Section \ref{subsec:DPSSPs_biasred}) builds upon the first one by incorporating a stochastic simulation   technique from    \cite{Blanchet:2015} to reduce the bias of gradients evaluated at the private estimators of the iterates.
    
    \item 
    In our algorithms, the iterates are privately estimated
    by sampling vertices proportionally to 
    their weights, a technique known as Maurey's Sparsification Lemma \cite{Pisier:1980}\footnote{This method is also known as the Maurey Empirical Method.}. 
    We provide various extensions of this classsical result, most notably for the duality gap and gradient sampling from vertices, 
    that are new and we believe are of independent interest. These extensions are challenging, primarily due to the presence of bias in the underlying estimation task. 
    See Section \ref{sec:Maurey} and Subsection \ref{subsec:convergenceofSMD} for more details. 

    \item 
    We give the first mirror descent style algorithm for DP-SCO in the polyhedral setting with nearly dimension-independent excess risk guarantees by combining the ideas we used for DP-SSP with the Anytime Online-to-Batch Conversion from \cite{anytime-online-to-batch}. See Section \ref{sec:DPSCO}. 
    While these rates are not optimal, we expect they could inspire future work into  more practical algorithms.
\end{itemize}

\begin{table}
\centering
\begin{tabular}{|p{1.1cm}|p{2cm}|p{3.4cm}|p{2.7cm} p{1.6cm}|}
\hline
\centering{Setting} & \centering{Objective} & \centering{Previous best-known rate} & \multicolumn{2}{|c|}{\centering{Our rate}}\\
\hline
\hline
\multirow{3}{*}{SSP} 
        & Bilinear & $\frac{\sqrt{\ell}}{\sqrt{n}} + \left(\frac{\ell^{3/2}}{n\varepsilon}\right)^{1/2}\text{\tiny{(*)}}$ \hfill{\scriptsize \cite{Hardt:2010}}  
        &$\color{black} \frac{\sqrt{\ell}}{\sqrt{n}} + \left(\frac{\ell^{3/2}}{n\varepsilon}\right)^{1/2}$\tiny{(*)} & {\footnotesize (Thm. \ref{thm:convergence_DPSSP_no_bias_reduction})}\\
        \cline{2-5}
        & Quadratic & \hspace{.5cm} None & $\color{black} \frac{\sqrt{\ell}}{\sqrt{n}} + \left(\frac{\ell^{3/2}}{n\varepsilon}\right)^{1/2}$\tiny{(*)} & {\footnotesize (Thm. \ref{thm:convergence_DPSSP_no_bias_reduction})}\\
        \cline{2-5}
        & $L_2$-smooth & \hspace{.5cm} None &$\color{black} \frac{\ell}{\sqrt{n}} + \left(\frac{\ell^{2}}{n\varepsilon}\right)^{1/2}$\tiny{(**)} & \footnotesize{(Thm. \ref{thm:convergence_bias_reduction})}\\
        \cline{2-5}
        &$L_1$-smooth & \hspace{.5cm} None & {$\frac{\sqrt{\ell}}{\sqrt{n}} + \left(\frac{\ell^{3/2}}{n\varepsilon}\right)^{1/3}$} &  {\footnotesize (Thm. \ref{thm:convergence_DPSSP_no_bias_reduction})}\\
\hline
\multirow{3}{*}{{SCO}} & 
        $L_2$-smooth& \hspace{.5cm} None & $\frac{\sqrt{\ell_x}}{\sqrt{n}} + \frac{\ell_x}{\sqrt{n\varepsilon}}$ &  \footnotesize{(Thm. \ref{thm:accuracy_anytime_method_L2smoothness})}\\
        \cline{2-5}
        & $L_1$-smooth  & $\color{black}\frac{\sqrt{\ell_x}}{\sqrt{n}} + \frac{\ell_x^{2/3}}{(n\varepsilon)^{2/3}}$\tiny{(*)}\hfill{\scriptsize\cite{asi2021private}}&  $\frac{\sqrt{\ell_x}}{\sqrt{n}} + \frac{\ell_x^{7/10}}{(n\varepsilon)^{2/5}}$ & \footnotesize{(Thm. \ref{thm:accuracy_anytime_method_L2smoothness})} \\
\hline
\end{tabular}
\caption{Rates derived from our results. We omit polylog($n,1/\delta$) factors, function-class parameters and constants. We refer to first-order-smooth and second-order-smooth functions as $L_1$-smooth and $L_2$-smooth, respectively. We denote $\ell = \log(d_x) + \log(d_y)$ and $\ell_x = \log(d_x)$. (*): these rates are optimal up to polylog factors. (**): this rate holds with 
high probability while the rest hold in expectation. } 
\label{tab:results_summary}
\end{table}

\subsection{Related Work}\label{subsec:intro/related_work}

\paragraph{DP stochastic saddle-points} Stochastic (non-private) saddle-point problems have connections to optimization, economics, machine learning, and many other fields. One such algorithm for solving these problems is the stochastic entropic mirror-descent algorithm \cite{JNLS:2009}, which is the basis for much of this work. Recently, a line of research has studied the differentially private (DP) version of stochastic saddle-point problems \cite{boob2024optimal,DPSMO, DPSGDA,Kang:2022,pmlr-v195-bassily23a}. These works focus exclusively on Euclidean setups, which necessarily leads to algorithms that have a polynomial dependence on the dimension. Moreover, due to the Euclidean setup, the techniques used in these works do not necessarily apply to the setting considered here. In addition, with only limited exceptions \cite{boob2024optimal,pmlr-v195-bassily23a}, most previous work provide guarantees on the {\it weak} duality gap. The (strong) duality gap \eqref{eq:duality_gap}, on the other hand, is more meaningful (see  \cite{pmlr-v195-bassily23a} for justification), but also more difficult to bound, most notably in the private setting. This is the convergence criterion that we use in this work.

\paragraph{Optimization in the $\ell_1$ setting}  Closest to this work is DP stochastic convex optimization (SCO) in the $\ell_1$ setting \cite{asi2021private,bassily2021non,Han:2022}, as well as earlier work on private empirical risk minimization (ERM)
\cite{Jain:2014,Talwar:2014,Talwar:2015}. These works attain nearly dimension-independent excess risk rates with private variants of the (stochastic) Frank-Wolfe method. 
Even though this method leads to optimal rates for DP-SCO, it is not known to yield any nontrivial rates for saddle-point problems. 
Even without DP constraints, it is not known how to solve SSP problems with Frank-Wolfe methods unless strong additional assumptions are made: namely, either simultaneously assuming smoothness and strong convexity/strong concavity\footnote{These assumptions are indeed strong: If $f:
\RR^d\mapsto \RR$ is an $L_1$-smooth and $\mu$-strongly convex function w.r.t.~$\|\cdot\|_1$, then $L_1/\mu=\Omega(d)$. This follows by conjugate duality and the argument in \cite[Example 4.1]{dAspremont:2018}}, 
 or strong convexity of the feasible sets \cite{Gidel:2017}.
Clearly, 
these assumptions are not suitable for this work.

\paragraph{Maurey Sparsification} This is a classical technique in high-dimensional convex geometry \cite{Pisier:1980}. The idea is based on sampling vertices with the barycentric coordinates weights and taking the empirical average, as nicely illustrated in the introduction of the book \cite{Vershynin:2018}.
This approximation is established for norms with  concentration estimates for independent random sums, e.g.,~quantified by their Rademacher type \cite{Pisier:2016}. This technique has been used systematically as a compression-type argument to obtain faster rates in statistical estimation and learning (e.g., ~\cite{Barron:1993,Zhang:2002}). More recently, \cite{Acharya:2019} proved a result similar to Lemma \ref{lem:maurey}, a functional approximation for randomized sparsification, in the context of model compression for distributed learning of generalized linear models. 
They also proposed a sparsified mirror descent algorithm that works in the $\ell_1/\ell_q$ setting where $q<\infty$, resorting on smoothness properties of the Bregman divergence. Furthemore, they only sparsify the outputs of each machine. We do not follow this approach. Instead, we sparsify every iterate of mirror descent. 
Furthermore, we show that entropic SMD (using a nonsmooth Bregman divergence) with iterate sparsification works for convex and smooth losses, which to our knowledge is an entirely new result, and does not follow from direct ex-post model compression.

\paragraph{Debiasing} This is a technique proposed in the stochastic simulation literature \cite{Blanchet:2015}, to leverage biased estimators whose accuracy improves with larger sample size. The main difference between the works on simulation and our approach is the finite sample size, which limits the amount of bias-reduction. The work 
\cite{Asi:2021bias} studied bias-reduction in regularization for SCO.
In concurrent work \cite{Ghazi:2024}, a gradient debiasing procedure is developed for Euclidean DP-SCO. Although our results do not depend on their techniques, we borrow some inspiration from them. Note futher that most of our results do not use bias-reduction.

\paragraph{Concurrent and follow up work} In independent and concurrent work, the paper \cite{Zhou:2024} investigates empirical and population worst-group risk minimization under DP, with a focus on group fairness and robust learning. While the convex-linear loss functions of their work correspond to a particular case of ours (convex-concave), their use of Euclidean geometry makes their results incomparable to ours. Furthermore, their sequential regularization approach is independent from ours. The follow up work \cite{BGM24noneuclideanDP-SSP} considers DP-SSPs in $\ell_p/\ell_q$ setup with $p, q \in [1,2]$. In contrast to our work, their results focus exclusively in the nonsmooth setting, leading to unavoidable polynomial-in-the-dimension convergence rates for the strong duality gap.

\subsection{Preliminaries}\label{sec: prelims}

Given $x \in \RR^d$, we denote the $1$-norm and $\infty$-norm by $\|x\|_1 = \sum_{j\in[d]} |x_j|$ and $\|x\|_{\infty} = \max_{j\in[d]} |x_j|$, respectively. A differentiable function $F: \RR^{d} \to \RR$ is $L_0$-Lipschitz w.r.t.~$\|\cdot\|_1$ over a set $\mathcal{X}$ if $\|\nabla F(x)\|_{\infty} \leq L_0$ for all  $x \in \X$. It is $L_1$-smooth w.r.t.~$\|\cdot\|_1$ over $\mathcal{X}$ if $\|\nabla F(x^1) - \nabla F(x^2)\|_{\infty} \leq L_1\|x^1-x^2\|_1$ for all  $x^1,x^2 \in \X$. We say a function is first-order-smooth if it is $L_0$-Lipschitz and $L_1$-smooth w.r.t.~$\|\cdot\|_1$. We say a function is second-order-smooth if it is first-order-smooth and 
its partial derivatives are $L_2$-smooth w.r.t~$\|\cdot\|_1$.\footnote{Note that this assumption is milder than the typical second-order-smoothness assumption in optimization, and it is satisfied by the widely used quadratic and logistic losses.} For $a,b:\mathbb{R}^p\mapsto\mathbb{R}$, we say $a \lesssim b$ if there exists an absolute constant $C>0$ such that for all $x\in\mathbb{R}^p$, $a(x) \leq Cb(x)$. When $a \lesssim b$ and $b \lesssim a$, we write $a \eqsim b$. We denote by $e_{i}$ the $i$-th canonical vector (the dimension being clear from context). The standard simplex in $\RR^d$ is the set $\Delta_d = \{x \in \RR^d: \|x\|_1 = 1, x_j \ge 0 \text{ for all } j\in[d]\}$. A vector $x\in\Delta_d$ 
induces a probability distribution over the canonical vectors, which we denote by $P_{x}$. Hence, $\hat{x} \sim P_{x}$ means that $\hat{x}$ takes the value $e_i$ with probability $x_i$. Note that if $\hat{x}\sim P_x$, then $\mathbb{E}[\hat{x}] = x$. Given $f:{\cal X}\times{\cal Y}\mapsto\RR$, we let $\Phi(x,y)=\big(\nabla_x f(x,y), - \nabla_y f(x,y) \big)$ be saddle operator (a.k.a.~the {\em monotone operator associated to the saddle-point problem}) \cite{Rockafellar:1970}. We denote $\ell = \log(d_x) + \log(d_y)$, where $d_x, d_y$ are the ambient dimensions of the optimization variables in the SSP problem.

\subsection{Motivating Examples}\label{sec:examples} 
As motivating examples for the DP-SSP problem \eqref{eq:SSP}, we present two problems of interest in privacy-preserving machine learning and data analysis that fit into our problem setup.

\paragraph{DP Synthetic Data Generation} 
Consider a finite sample space $\mathcal{Z} = \{z_1,...,z_{|\mathcal{Z}|}\}$ and an unknown distribution $\mathcal{D}$ supported on $\mathcal{Z}$. Suppose we have a dataset $S = \{z_{i_1},...,z_{i_n}\} \overset{iid}{\sim} \mathcal{D}$ and a set of queries $\mathcal{Q}= \{q: \mathcal{Z} \to [-1,1]\}$. Defining $q(S) = \frac{1}{|S|}\sum_{z \in S} q(z)$ and $q(\mathcal{D}) = \mathbb{E}_{z \sim \mathcal{D}}[q(z)]$, our goal is to construct a {
\em synthetic dataset} $\tilde{S}$ such that $\Tilde{S}$ is private with respect to $S$ and $\max_{q\in {\cal Q}}|q(\tilde{S}) - q(\mathcal{D})|$ is as small (in expectation) as possible. Using a linear relaxation for the discrete maximum over ${\cal Q}$ and viewing each query $q$ as vector $(q(z))_{z\in \Z}$, we obtain a bilinear SSP formulation as follows: 
\begin{equation}\label{eqn:synthetic_data_formulation}
    \min_{x \in \Delta_{|\mathcal{Z}|}} \max _{y \in \Delta_{|\mathcal{Q}|}} \mathbb{E}_{z \sim \mathcal{D}} \Big[\sum_{j \in [|\mathcal{Q}|]} y_j (q_j(z) - \langle q_j, x\rangle)\Big].
\end{equation}
While our Algorithm \ref{Alg:DPSSP_no_bias_reduction}, proposed in Section \ref{subsec:DPSSPs_nobiasred}, can solve this problem, applying it directly would result in a slightly suboptimal dependence on the histogram size, $|{\cal Z}|$. 
However, a variant of it that leverages the specific structure of this problem attains the optimal rates  \cite{lower_bound_DP_query_answering}. More concretely, the following theorem holds.

\begin{theorem}\label{thm:synthetic_data}
   Let $0 < \delta < 1$, $0 < \varepsilon < 8\log(1/\delta)$. 
    If $|\mathcal{Q}| \leq |\mathcal{Z}|^C$, then a variant of Algorithm \ref{Alg:DPSSP_no_bias_reduction} constructs an $(\varepsilon, \delta)$-DP synthetic dataset $\Tilde{S}$ of $n$ samples, such that
    \[\mathbb{E}\Big[\max_{q\in \mathcal{Q}} |\mathbb{E}_{z\sim \mathcal{D}}[q(z)] - q(\Tilde{S})|\Big] \lesssim
    \sqrt{\frac{\log(|{\cal Q}|)}{n}} + (1+C)\frac{(\log(1/\delta)\log(|\Z|))^{1/4}\sqrt{\log(|{\cal Q}|)}}{\sqrt{n\varepsilon}}.\]
    Furthermore, this rate is optimal up to constant factors.
\end{theorem}

\paragraph{Minimizing the maximal loss under DP} Consider a collection of convex losses $f_i : \X\times \Z \to [-B, B]$, $i\in [d_y]$. Suppose $\X$ is a polytope. We are interested in minimizing  
$\max_{i \in [d_y]} \{F_i(x) := \mathbb{E}_{z\sim \D}[f_i(x;z)]\}$. This can be formulated as follows: 
\[\min_{x \in \mathcal{X}} \{\max_{i \in [d_y]} F_i(x)\} = \min_{x \in \mathcal{X}} \max_{y \in \ysimplex} \mathbb{E}_{z\sim \D}\left[\sum_{i \in [d_y]} y_i f_i(x;z)\right].\]
The maximal loss formulation encompasses applications to fairness \cite{Jagielski:2019,Fair_FL,Lowy:2023} and robustness \cite{Mohajerin:2018} under privacy constraints. As a remark, if the functions $(f_i)_{i\in [d_y]}$ are $L_0$-Lipschitz and $L_1$-smooth on $x$ w.r.t~$\|\cdot\|_1$, then the problem objective $f(x,y;z) = \sum_{i \in [d_y]} y_i f_i(x;z)$ is convex-concave, $\max\{L_0, B\}$-Lipschitz,  $\max\{L_0, L_1\}$-smooth on $(x,y)$. If the functions $(f_i)_{i\in [d_y]}$ are also $L_2$-second-order-smooth on $x$, then $f(x,y)$ is $\max\{L_1, L_2\}$-second-order-smooth on $(x,y)$. Examples of convex losses that are Lipschitz, smooth and second-order-smooth over compact sets
are the quadratic and the logistic loss. 

\section{Maurey Sparsification with Functional Value Approximation}\label{sec:Maurey}

The classical Maurey Sparsification Lemma \cite{Pisier:1980} is an application of the  probabilistic method that leads to an approximate and (nearly) dimension-independent version of  Carath\'eodory's Theorem \cite{Carathodory:1911}. While this method is well-known to apply with smooth norms, it appears that for norms such as $\|\cdot\|_1$ such approximations are out of reach.\footnote{In fact, $\|\cdot\|_1$ is as far as being smooth as possible, in the sense of having trivial Rademacher type \cite{Pisier:2016}. Furthermore, polynomial in $d$ mean estimation lower bounds with respect to $\|\cdot\|_1$ are  known \cite{Waggoner:2015}.} We circumvent this limitation by directly working with function value approximations. These results will be crucial for the analysis of our algorithms in the forthcoming sections.

We start with a simple adaptation of Maurey's method for function approximation in the general smooth case. A similar proof 
appears in \cite{Acharya:2019}. Throughout this section we denote $\bar x^T = \sum_{t = 1}^{T} \frac{x^t}{T} $ and $\bar a^T = \sum_{t = 1}^{T} \frac{a^t}{T} $ unless explicitly said otherwise.
\begin{lemma}[Function value approximation in expectation]\label{lem:maurey}
    Let $(\mathbf{E},\|\cdot\|)$ be a normed space, and let $F$ be a $L_1$-smooth function w.r.t.~$\|\cdot\|$. Let $x^1,...,x^T$ be a collection of deterministic vectors, $(\lambda_1,\ldots,\lambda_T) \in \Delta_T$, and $a^1,...,a^T$ be random vectors such that $\mathbb{E}[a^t | a^{t-1},..,a^1] = x^t$. Let $\bar{x}^T =  \sum_{t = 1}^{T} \lambda_t x^t$ and $\bar{a}^T = \sum_{t = 1}^{T} \lambda_t a^t$. Then $\big|\mathbb{E}\left[F\left( \bar a^T\right) - F\left( \bar x^T\right)\right]\big|  \leq  \frac{L_1}{2}\sum_{t=1}^T\lambda_t^2\mathbb{E}\left[\left\|a^t - x^t\right\|^2\right]$.
\end{lemma}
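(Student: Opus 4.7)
The plan is to use a telescoping decomposition along the partial "replacement" sequence, combined with the smoothness inequality and the martingale-type property of the $a^t$'s. Since $\|\cdot\|$ is only an arbitrary norm (no inner product structure), I cannot hope to get a clean orthogonality identity for $\|Y-X\|^2$ where $Y = \sum_t \lambda_t a^t$ and $X=\sum_t\lambda_t x^t$; this is the main conceptual hurdle, and telescoping is exactly what lets me bypass it.

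Concretely, I will define the interpolating sequence
\[
S_0 \;=\; \sum_{t=1}^T \lambda_t x^t, \qquad S_t \;=\; S_{t-1} + \lambda_t (a^t - x^t) \quad \text{for } t\ge 1,
\]
so that $S_T = \sum_{t=1}^T \lambda_t a^t$ and the difference $F(S_T) - F(S_0)$ telescopes as $\sum_{t=1}^T [F(S_t)-F(S_{t-1})]$. Since $F$ is $L_1$-smooth w.r.t.\ $\|\cdot\|$, the standard descent inequality gives
\[
F(S_t) - F(S_{t-1}) \;\le\; \langle \nabla F(S_{t-1}),\, S_t - S_{t-1}\rangle + \frac{L_1}{2}\|S_t - S_{t-1}\|^2 \;=\; \lambda_t \langle \nabla F(S_{t-1}),\, a^t - x^t\rangle + \frac{L_1}{2}\lambda_t^2 \|a^t - x^t\|^2.
\]

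The critical observation is that $S_{t-1}$ is measurable with respect to $\sigma(a^1,\ldots,a^{t-1})$, so conditioning on that history and using the hypothesis $\mathbb{E}[a^t \mid a^1,\ldots,a^{t-1}] = x^t$ makes the inner-product term vanish in expectation:
\[
\mathbb{E}\bigl[\langle \nabla F(S_{t-1}),\, a^t - x^t\rangle\bigr] \;=\; \mathbb{E}\bigl[\langle \nabla F(S_{t-1}),\, \mathbb{E}[a^t\mid a^{1:t-1}] - x^t\rangle\bigr] \;=\; 0.
\]
Summing the per-step smoothness bounds over $t=1,\ldots,T$ and taking expectation therefore yields the claimed inequality directly. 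The step I expect to require the most care (though it is ultimately routine) is justifying that the conditional expectation argument applies cleanly when $F$ is only assumed smooth rather than, say, continuously differentiable with integrability controls on $\nabla F$; this is handled because $\|\nabla F(S_{t-1})\|_*$ can be bounded using $L_1$-smoothness plus a fixed reference point, so all expectations above are finite and Fubini/tower-property manipulations are valid.
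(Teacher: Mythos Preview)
Your proposal is correct and is essentially identical to the paper's proof: your interpolants $S_{t-1}=\bar{x}^T+\sum_{k<t}\lambda_k(a^k-x^k)$ coincide with the paper's points $\bar{x}^T+\bar a^{t-1}-\bar x^{t-1}$, and your inner-product terms are precisely the martingale-difference sequence $\alpha_t$ there. Your added remark on integrability of $\nabla F(S_{t-1})$ is a nice touch that the paper leaves implicit.
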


\begin{proof}
    Let $\alpha_t = \Big\langle \nabla F \Big( \sum_{k = t}^T \lambda_k x^k +  \sum_{k = 1}^{t-1} \lambda_k a^k \Big), \lambda_t (a^{t} - x^{t}) \Big\rangle$. Noting that $F(\bar a^T) = F(\bar x^T+ \bar a^T - \bar x^T)$, and using repeatedly the $L_1$-smoothness of $F$, we have
    \begin{align*}
        F(\bar a^T)
        &\leq F(\bar{x}^T +  \bar a^{T-1} - \bar x^{T-1}) + \alpha_T + \frac{L_1\lambda_T^2\left\|a^T - x^T\right\|^2}{2}\\
        \hdots &\leq F\left(\bar{x}^T \right) + \sum_{t\in [T]}\alpha_t + \frac{L_1}{2}\sum_{t\in[T]}\lambda_t^2\|a^t - x^t\|^2.
    \end{align*}
    Clearly $(\alpha_t)_{t\in[T]}$ is a martingale difference sequence. Hence, $\mathbb{E}\big[\sum_{t\in[T]} \alpha_t\big] = 0$ 
    and consequently $\mathbb{E}[F(\bar{a}^T) - F(\bar{x}^T)] \leq \frac{L_1}{2}\sum_{t=1}^T\lambda_t^2\mathbb{E}\left[\left\|a^t - x^t\right\|^2\right]$. Since $-F$ is also $L_1$-smooth, the same idea yields $\mathbb{E}[F(\bar{x}^T) - F(\bar{a}^T)] \leq \frac{L_1}{2}\sum_{t=1}^T\lambda_t^2\mathbb{E}\left[\left\|a^t - x^t\right\|^2\right]$.
\end{proof}

We now instantiate  Lemma~\ref{lem:maurey} for the space $(\mathbb{R}^d,\|\cdot\|_1)$ with $\lambda_i = \frac{1}{T}$, $i \in [T]$. In later sections we will see that $x^t \in \Delta_d$ represents the $t$-th iterate of our algorithm and $a^t \sim P_{x^t}$ a private, unbiased estimator of $x^t$. 

\begin{corollary}[Function value approximation w.r.t.~$\|\cdot\|_1$ over the simplex]\label{cor:maurey_over_simplex} Let $F: \RR^d \to \RR$ be a $L_1$-smooth function w.r.t.~$\|\cdot\|_1$. Let $x^1,...,x^T \in \Delta_d$ be deterministic vectors and $a^1,...,a^T \in \Delta_d$ be random vectors such that $\mathbb{E}[a^t | a^{t-1},..,a^1] = x^t$ for all $t \in [T]$. Then 
$\left|\mathbb{E}\left[F\left(\bar a^T\right) - F\left(\bar x^T\right)\right] \right| \leq \frac{2L_1}{T}$.
\end{corollary}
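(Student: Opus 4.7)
The plan is to directly specialize Lemma~\ref{lem:maurey} to $(\mathbb{R}^d,\|\cdot\|_1)$ with the uniform weights $\lambda_t = 1/T$. Under these choices the lemma yields
\[
\mathbb{E}\left[F\left(\frac{1}{T}\sum_{t=1}^T a^t\right) - F\left(\frac{1}{T}\sum_{t=1}^T x^t\right)\right] \leq \frac{L_1}{2T^2}\sum_{t=1}^T \mathbb{E}\!\left[\|a^t - x^t\|_1^2\right],
\]
so the whole proof reduces to bounding $\|a^t - x^t\|_1$ cleanly.

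The crux of the argument is that both $a^t$ and $x^t$ live in the simplex $\Delta_d$, so $\|a^t\|_1 = \|x^t\|_1 = 1$. The triangle inequality then gives $\|a^t - x^t\|_1 \leq 2$ deterministically, and hence $\mathbb{E}[\|a^t - x^t\|_1^2] \leq 4$. Plugging this into the previous display produces $\tfrac{L_1}{2T^2}\cdot 4T = \tfrac{2L_1}{T}$, which is the desired upper bound in one direction.

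To upgrade the one-sided bound to the absolute value, I would apply Lemma~\ref{lem:maurey} a second time to the function $-F$. Since $L_1$-smoothness is a statement about the Lipschitz continuity of the gradient in the dual norm, it is preserved under sign change: $-F$ is also $L_1$-smooth w.r.t.~$\|\cdot\|_1$. Applying the lemma verbatim to $-F$ yields the same bound on $\mathbb{E}\!\left[F(\tfrac{1}{T}\sum_t x^t) - F(\tfrac{1}{T}\sum_t a^t)\right]$, and combining the two directions gives the claimed absolute-value bound.

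I don't expect any serious obstacle here: the result is essentially an instantiation of Lemma~\ref{lem:maurey} in which the geometry of the simplex absorbs the $\|a^t - x^t\|^2$ terms into a dimension-free constant. The only mild subtlety is remembering that the lemma, as stated, delivers only a one-sided inequality, and that the symmetric bound follows from invoking it on $-F$.
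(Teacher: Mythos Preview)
Your proposal is correct and follows exactly the approach the paper intends: the corollary is described in the text as a direct instantiation of Lemma~\ref{lem:maurey} for $(\mathbb{R}^d,\|\cdot\|_1)$ with uniform weights $\lambda_t=1/T$, and the two-sided bound is obtained by applying the lemma to $-F$ (which is also $L_1$-smooth). The only computation needed is $\|a^t-x^t\|_1\leq 2$ on the simplex, which you carry out correctly.
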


The above result bounds the bias incurred by evaluating gradients at sampled vertices. More concretely, the following corollaries hold. 

\begin{corollary}[{Gradient bias with second-order-smoothness}]\label{cor:gradient_bias_L2smooth}
Suppose $F: \RR^d \to \RR$ has partial derivates that are $L_2$-smooth w.r.t.~$\|\cdot\|_1$. Let $x^1,...,x^T \in \Delta_d$ be deterministic vectors and $a^1,...,a^T \in \Delta_d$ be random vectors such that $\mathbb{E}[a^t | a^{t-1},..,a^1] = x^t$ for all $t \in [T]$. 
Then $\left\|\mathbb{E}\left[\nabla F\left(\bar a^T\right) - \nabla F\left(\bar x^T\right)\right] \right\|_{\infty} \leq \frac{2L_2}{T}$.
\end{corollary}
\begin{proof}
    This follows directly from applying Corollary \ref{cor:maurey_over_simplex} to each partial derivate: for all $j\in[d]$, we have $\left|\mathbb{E}\left[\nabla_j F\left(\bar a^T\right) - \nabla_j F\left(\bar x^T\right)\right] \right| \leq \frac{2L_2}{T}$.
\end{proof}

\begin{corollary}[{Gradient bias with first-order-smoothness}]\label{cor:gradient_bias_L1smooth}
Suppose $F: \RR^d \to \RR$ is $L_1$-smooth w.r.t.~$\|\cdot\|_1$.Let $x^1,...,x^T$ be a collection of deterministic vectors in $\Delta_d$ and $a^1,...,a^T \in \Delta_d$ be random vectors such that $\mathbb{E}[a^t | a^{t-1},..,a^1] = x^t$ for all $t \in [T]$. 
Then $\left\|\mathbb{E}\left[\nabla F\left(\bar a^T\right) - \nabla F\left(\bar x^T\right)\right] \right\|_{\infty} \leq \frac{4L_1}{\sqrt{T}}$.
\end{corollary} 

\begin{proof}
    Denote $\bar{x} = \frac{1}{T} \sum_{t\in[T]} x^t, \bar{a} = \frac{1}{T} \sum_{t\in[T]} a^t$. By $L_1$-smoothness of $F$, we have for all $j\in[d], r \in \RR^+$: $F(\bar x + r e_j) \leq F(\bar x) + \langle\nabla F(\bar x),re_j\rangle + \frac{L_1r^2}{2}\|e_j\|_1^2$.
    Re-arranging this expression gives $-\langle\nabla F(\bar x),e_j\rangle \leq \frac{F(\bar x) - F(\bar x + r e_j)}{r} + \frac{L_1r}{2}$.
    By $L_1$-smoothness of $-F$, we can obtain $\langle\nabla F(\bar a),e_j\rangle \leq \frac{F(\bar{a} + r e_j) - F(\bar a)}{r} + \frac{L_1r}{2}$. Adding these inequalities and taking expectations we get
    \[ \mathbb{E}[\langle\nabla F(\bar a) - \nabla F(\bar x),e_j\rangle] \leq \frac{\mathbb{E}\left[F(\bar a + r e_j) - F(\bar x + r e_j) + F(\bar x) - F(\bar a)\right]}{r} + L_1r.\]
    By applying Corollary \ref{cor:maurey_over_simplex} to $F(\cdot)$ and $F(\cdot + re_j)$ we obtain 
    \[\mathbb{E}\left[F(\bar a + r e_j) - F(\bar x + r e_j) + F(\bar x) - F(\bar a)\right] \leq 4L_1/T.\] Hence, for all $j\in[d]$, $\mathbb{E}[\nabla_j F(\bar a) -\nabla_j F(\bar x)] = \mathbb{E}[\langle\nabla F(\bar a) -\nabla F(\bar x),e_j\rangle] \leq \frac{4L_1}{rT} + L_1r.$ Analogously we can prove the same bound for $\mathbb{E}[\langle\nabla F(\bar x) -\nabla F(\bar a),e_j\rangle]$. Finally, 
    choosing $r = 2/\sqrt{T}$ gives the result. 
\end{proof}

We will also need to bound the second-moment of the error incurred by evaluating gradients at sampled vertices. Towards this, we first observe that the in-expectation guarantees obtained in Lemma \ref{lem:maurey} can be turned into high-probability guarantees by
controlling the increments of the martingale difference sequence $(\alpha_t)_{t\in [T]}$  
with the bounded differences inequality.

\begin{lemma}[Function value approximation with high probability]\label{lem:maurey_whp}
    Let $(\mathbf{E},\|\cdot\|)$ be a normed space and $F$ be an $\lips$-Lipschitz, $L_1$-smooth function w.r.t.~$\|\cdot\|$. Let $x^1,...,x^T$ be a collection of deterministic vectors, $(\lambda_1,\ldots,\lambda_T) \in \Delta_T$ and $a^1,...,a^T$ be random vectors such that $\mathbb{E}[a^t | a^{t-1},..,a^1] = x^t$. Suppose
    $\|x^t-a^t\| \leq D$ a.s.~for all $t\in [T]$. Define $\bar{x}^T =  \sum_{t = 1}^{T} \lambda_t x^t$ and $\bar{a}^T = \sum_{t = 1}^{T} \lambda_t a^t$. Then, for any $\beta > 0$
    \[ \mathbb{P}\left[ F\left(\bar a^T\right)-F\left(\bar x^T\right) \geq \frac{L_1D^2}{2}\sum_{t=1}^T\lambda_t^2+
    \beta \sqrt{2}L_0D\sqrt{\sum_{t=1}^T\lambda_t^2}\,\right] \leq \exp(-\beta^2).\]
\end{lemma}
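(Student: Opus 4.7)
The plan is to build directly on the decomposition that was already proved in Lemma~\ref{lem:maurey}. Recall that the $L_1$-smoothness telescoping gave
$$F\Big(\sum_{t=1}^T\lambda_t a^t\Big) - F\Big(\sum_{t=1}^T\lambda_t x^t\Big) \;\leq\; \sum_{t=1}^T \alpha_t \;+\; \frac{L_1}{2}\sum_{t=1}^T\lambda_t^2\|a^t-x^t\|^2,$$
where $\alpha_t = \langle \nabla F(\bar x^T + \bar a^{t-1}-\bar x^{t-1}),\,\lambda_t(a^t-x^t)\rangle$ is a martingale difference sequence with respect to the filtration $\mathcal{F}_{t-1} = \sigma(a^1,\ldots,a^{t-1})$. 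In Lemma~\ref{lem:maurey} the proof simply took expectations, using that $\mathbb{E}[\sum_t\alpha_t]=0$. Here, instead, I would keep both terms and bound each of them with a high-probability argument.

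First, I would deal with the $L_1$-smoothness term pointwise: since $\|a^t-x^t\|\leq D$ holds almost surely, we immediately get $\frac{L_1}{2}\sum_t \lambda_t^2 \|a^t-x^t\|^2 \leq \frac{L_1 D^2}{2}\sum_t\lambda_t^2$ on the full probability event, with no concentration needed. This accounts for the first summand appearing inside the probability in the statement.

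Second, I would control $\sum_t\alpha_t$ with a standard Azuma--Hoeffding inequality for bounded martingale differences, as the hint ``bounded differences inequality'' suggests. The Lipschitz hypothesis on $F$ (i.e.\ $\|\nabla F\|_*\leq L_0$) together with the duality between $\|\cdot\|$ and $\|\cdot\|_*$ gives, for every realization,
$$|\alpha_t| \;\leq\; \|\nabla F(\bar x^T+\bar a^{t-1}-\bar x^{t-1})\|_*\cdot \lambda_t\|a^t-x^t\| \;\leq\; L_0\,\lambda_t D,$$
which is an $\mathcal{F}_{t-1}$-measurable and in fact uniform bound. Azuma--Hoeffding applied to the MDS $(\alpha_t)$ with increments bounded by $c_t = L_0\lambda_t D$ yields
$$\mathbb{P}\Big[\sum_{t=1}^T \alpha_t \geq s\Big] \;\leq\; \exp\!\Big(-\tfrac{s^2}{2L_0^2 D^2 \sum_t\lambda_t^2}\Big).$$
Setting $s = \beta\, \tfrac{L_0 D}{\sqrt{2}}\sqrt{\sum_t\lambda_t^2}$ (possibly absorbing constants into the version of Azuma--Hoeffding invoked, e.g.\ the sub-Gaussian form for symmetric increments) gives a tail of the stated form $\exp(-\beta^2)$. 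Combining the deterministic bound on the $L_1$-smoothness term with this single tail event, via one union bound (trivially, since one event has probability one), produces the inequality in the statement.

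I do not expect a substantive obstacle here: the bulk of the work — identifying the martingale decomposition and the $L_1$-smooth quadratic correction — is already present in Lemma~\ref{lem:maurey}, and the new ingredients are just the almost-sure bound $\|a^t-x^t\|\leq D$ (to convert the correction term into a deterministic quantity) and the Lipschitz bound on $|\alpha_t|$ (to feed Azuma--Hoeffding). The only minor care needed is in tracking constants when matching the exact form $\beta L_0 D/\sqrt{2}$ in the statement, which dictates which variant of the bounded differences inequality to invoke.
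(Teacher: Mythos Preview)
Your proposal is correct and follows essentially the same route as the paper: reuse the smoothness decomposition from Lemma~\ref{lem:maurey}, bound the quadratic term deterministically via $\|a^t-x^t\|\leq D$, and apply Azuma--Hoeffding to the martingale difference sequence $(\alpha_t)$ using the Lipschitz bound $|\alpha_t|\leq L_0 D\lambda_t$. The paper's proof is exactly this, and your caveat about tracking the constant in the version of Azuma--Hoeffding invoked is well placed.
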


\begin{proof}
    Let $\alpha_t = \lambda_t\left\langle \nabla F \left( \sum_{k = t}^T \lambda_k x^k + \sum_{k = 1}^{t-1} \lambda_k a^k \right), a^{t} - x^{t} \right\rangle$ as in the proof of Lemma \ref{lem:maurey}. From that proof we have that $F(\bar{a}^T) - F\left(\bar{x}^T \right) \leq  \sum_{t=1}^T\alpha_t + \frac{L_1D^2}{2}\sum_{t=1}^T\lambda_t^2$. Hence
    \begin{equation}\label{eq:deviation_ineq}
        \mathbb{P}[F(\bar{a}^T) - F\left(\bar{x}^T \right) \ge \alpha] \leq \mathbb{P}\left[\sum_{t=1}^T\alpha_t + \frac{L_1D^2}{2}\sum_{t=1}^T\lambda_t^2 \ge \alpha\right] = \mathbb{P}\left[\sum_{t\in [T]}\alpha_t \ge \alpha'\right],
    \end{equation}
    where $\alpha = \frac{L_1D^2}{2}\sum_{t=1}^T\lambda_t^2 + \alpha'$. Next, we bound the probability on the right-hand side. Note that $(\alpha_t)_{t \in [T]}$ is a martingale difference sequence 
    and $|\alpha_i| \leq L_0D \lambda_i$, so the bounded differences inequality gives $\mathbb{P}\left[\sum_{t=1}^T\alpha_t \ge \beta \sqrt{2}L_0D\sqrt{\sum_{t=1}^T\lambda_t^2}\right] \leq \exp(-\beta^2)$, for any $\beta > 0$. Making $\alpha' = \beta \sqrt{2}L_0D\sqrt{\sum_{t=1}^T\lambda_t^2}$ and replacing back into Equation \ref{eq:deviation_ineq}, we conclude the claimed result.
\end{proof}

We now use Lemma \ref{lem:maurey_whp} to give bounds on the expected second-moment error, with respect to $\|\cdot\|_{\infty}$. 

\begin{lemma}[Second moment of maximum expected error] \label{lem:maurey_max_error}
    Let $(\mathbf{E},\|\cdot\|)$ be a normed space and $F_1,...,F_M$ a collection of $L_0$-Lipschitz and $L_1$-smooth functions w.r.t~$\|\cdot\|$. Let $x^1,\ldots,x^T \in \RR^d$, $(\lambda_1,\ldots,\lambda_T)\in \Delta_T$  and $a^1,...,a^T$ be a collection of random vectors such that $\mathbb{E}[a^t|a^{t-1},\ldots,a^1] = x^t$ and $\|a^t-x^t\|\leq D$ a.s.~for all $t\in [T]$. Let $\bar{a}^T = \sum_{t \in [T]} \lambda_t a^t$ and $\bar x^T=\sum_{t\in[T]}\lambda_t x^t$. Then 
    \[
    \mathbb{E}\left[\max_{j \in [M]} |F_j(\bar a^T) - F_j(\bar x^T)|^2\right] \leq \frac{L_1^2D^4}{2}\left(\sum_{t=1}^T\lambda_t^2\right)^2 + 2L_0^2D^2(4+\log(M))\sum_{t=1}^T\lambda_t^2.\]
\end{lemma}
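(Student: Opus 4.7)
The plan is to lift the high-probability function-value approximation of Lemma~\ref{lem:maurey_whp} to a bound on the second moment of $Z := \max_{j\in[M]} |F_j(\bar a^T) - F_j(\bar x^T)|$ via a standard tail-integration argument. First I apply Lemma~\ref{lem:maurey_whp} separately to each $F_j$ and to $-F_j$ (both are $L_0$-Lipschitz and $L_1$-smooth with the same constants, so the lemma applies unchanged), and take a union bound over the resulting $2M$ events. Writing $A := \tfrac{L_1 D^2}{2}\sum_{t=1}^T \lambda_t^2$ and $B := \tfrac{L_0 D}{\sqrt{2}}\sqrt{\sum_{t=1}^T \lambda_t^2}$, this yields the sub-Gaussian-type tail
\[
\mathbb{P}[Z \geq A + \beta B] \;\leq\; 2M \exp(-\beta^2) \qquad (\beta \geq 0).
\]

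Next I use the layer-cake identity $\mathbb{E}[Z^2] = \int_0^\infty 2z\,\mathbb{P}[Z\geq z]\,dz$ and split the integral at the threshold $z^\star := A + B\sqrt{\log(2M)}$, the point at which the sub-Gaussian bound first becomes non-vacuous. On $[0,z^\star]$ I use the trivial bound $\mathbb{P}[Z\geq z]\leq 1$, which contributes $(z^\star)^2 \leq 2A^2 + 2B^2\log(2M)$ via $(a+b)^2\leq 2a^2+2b^2$. On $[z^\star,\infty)$ I change variables to $u=(z-A)/B$ and apply the sub-Gaussian tail; the remaining work reduces to the two standard integrals $\int_{\sqrt{\log(2M)}}^\infty \exp(-u^2)\,du$ and $\int_{\sqrt{\log(2M)}}^\infty u\exp(-u^2)\,du$, each of which carries a factor $\exp(-\log(2M)) = (2M)^{-1}$ that exactly cancels the union-bound prefactor $2M$, leaving only an $O(AB+B^2)$ contribution. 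Combining the two pieces, applying AM--GM to the cross term $AB$, and substituting back the definitions of $A$ and $B$ yields a bound of the claimed form, with the small residual numerical constants absorbed into the additive ``$6$'' inside the factor $(6+\log M)$.

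The main obstacle is the constant-bookkeeping needed to hit the stated prefactors $5/4$ in front of $L_1^2 D^4(\sum\lambda_t^2)^2$ and $1/2$ in front of $(6+\log M)L_0^2 D^2\sum\lambda_t^2$. The single design choice that makes the proof work is the splitting point $z^\star = A+B\sqrt{\log(2M)}$: any larger choice wastes information in the trivial bound and inflates the $\log(M)$ coefficient, while any smaller choice fails to cancel the multiplicative $2M$ coming from the union bound, producing an unacceptable polynomial-in-$M$ dependence. Everything else---the two applications of Lemma~\ref{lem:maurey_whp}, the union bound, and the closed-form evaluation of the Gaussian tail integrals---is entirely routine.
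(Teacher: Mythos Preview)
Your proposal is correct and takes essentially the same route as the paper: define $A=\tfrac{L_1D^2}{2}\sum_t\lambda_t^2$ and $B=\tfrac{L_0D}{\sqrt 2}\sqrt{\sum_t\lambda_t^2}$, apply Lemma~\ref{lem:maurey_whp} to each $\pm F_j$ and union-bound to get a sub-Gaussian tail on $Z$, then integrate via layer-cake with the split chosen so that the $M$-prefactor cancels against $e^{-c^2}$, finishing with AM--GM on the cross term. The only cosmetic difference is that the paper leaves the split point as a free parameter $B$ and optimizes it to $C\sqrt{\log M}$ at the end, whereas you fix $z^\star=A+B\sqrt{\log(2M)}$ up front.
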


\begin{proof}
    Define the constants $A = \frac{L_1D^2}{2}\sum_{t=1}^T\lambda_t^2$ and $C=\sqrt{2}L_0D\sqrt{\sum_{t=1}^T\lambda_t^2}$. We will introduce a quantity, $B$, in the following computation, and later optimize over the choice of this $B$. We proceed as follows:
    \begin{align*}
        \mathbb{E}\bigg [\max_{j \in [M]} |F_j(\bar a^T) - F_j(\bar x^T)|^2 &\bigg ] =  \int_{0}^{\infty} \mathbb{P}\left[\max_{j \in [M]} |F_j(\bar a^T) - F_j(\bar x^T)|\ge \beta\right]\beta d\beta\\
        &\hspace{-2.3cm}\leq \int_0^{A+B} \beta d\beta + \sum_{j\in [M]}\int_{A+B}^{\infty} \mathbb{P}\left[|F_j(\bar a^T) - F_j(\bar x^T)|\ge \beta\right]\beta d\beta\\
        &\hspace{-2.3cm}= \frac{(A+B)^2}{2} + \sum_{j \in [M]}\int_{B/C}^{\infty} \mathbb{P}\left[|F_j(\bar{x}) - F_j(\bar{a}^T)|\ge A + C\beta\right](A + C\beta) Cd\beta\\
        &\hspace{-2.3cm}\leq \frac{(A+B)^2}{2} + 2MC \left[C\int_{B/C}^{\infty} \exp\left(-\beta^2\right)\beta d\beta + A \int_{B/C}^{\infty} \exp\left(-\beta^2\right) d\beta\right]\\ 
        &\hspace{-2.3cm}= \frac{(A+B)^2}{2} + 2MC \left[C\exp\left(-(B/C)^2\right) + A\sqrt{\pi} \mathbb{P}[X \ge B/C]\right],
    \end{align*}
    where the first inequality comes from the union bound, the second from Lemma \ref{lem:maurey_whp}, and in the last line, we use density of a normal random variable, $X \sim {\cal N}(0, 1/2)$. From Mill's inequality, we have $\mathbb{P}[X \ge B/C] \leq \frac{C}{B\sqrt{\pi}}\exp\left(-(B/C)^2)\right)$. Hence, 
    \[\mathbb{E}\left[\max_{j \in [M]} |F_j(\bar a^T) - F_j(\bar x^T)|^2\right] \leq \frac{(A+B)^2}{2} + 2MC(C+A)\exp\left(-(B/C)^2\right).\]
    Finally, setting $B = C\sqrt{\log(M)}$ if $M>1$ and $B=C$ if $M=1$ gives
    \[\mathbb{E}\left[\max_{j \in [M]} |F_j(\bar a^T) - F_j(\bar x^T)|^2\right] \leq 2A^2 + (4+\log(M))C^2.\]
\end{proof}

We conclude this section by presenting two corollaries of Lemma \ref{lem:maurey_max_error}.

\begin{corollary}[Gradient approximation error with second-order-smoothness] \label{cor:grad_approx_second_mom_L2}
    Let $F: \RR^d \to \RR$ be a $L_1$-smooth function w.r.t.~$\|\cdot\|_{1}$ with $L_2$-smooth partial derivatives. Let $x^1,...,x^T \in \Delta_d$ be a collection of deterministic vectors and $a^1,...,a^T \in \Delta_d$ be random vectors such that $\mathbb{E}[a^t | a^{t-1},..,a^1] = x^t$ for all $t \in [T]$. Then
    \[
    \mathbb{E}\left\|\nabla F\left(\bar a^T\right) - \nabla F\left(\bar x^T\right)\right\|^2_{\infty} \leq \frac{8L_2^2}{T^2} + \frac{8L_1^2(4+\log(d))}{T}.\]
\end{corollary}
\begin{proof}
    Apply Lemma~\ref{lem:maurey_max_error} with $F_j=\nabla_j F$, $j\in[d]$. 
\end{proof}

\begin{corollary}[Gradient approximation error with first-order-smoothness] \label{cor:grad_approx_second_mom_noL2}
    Let $F: \RR^d \to \RR$ be a function that is $L_0$-Lipschitz and $L_1$-smooth w.r.t.~$\|\cdot\|_{1}$. Let $x^1,...,x^T \in \Delta_d$ be a collection of deterministic vectors and $a^1,...,a^T \in \Delta_d$ be random vectors such that $\mathbb{E}[a^t | a^{t-1},..,a^1] = x^t$ for every $t \in [T]$. Then
    \[
    \mathbb{E}\left\|\nabla F\left(\bar a^T\right) - \nabla F\left(\bar x^T\right)\right\|^2_{\infty} \leq \frac{8\sqrt{2}L_1^2}{T^{3/2}\sqrt{4 + \log(d)}} + \frac{8\sqrt{2}(L_0^2+L_1^2)\sqrt{(4+\log(d))}}{\sqrt{T}}.\]
\end{corollary}
\begin{proof} 
    Similarly to the proof of Corollary \ref{cor:gradient_bias_L1smooth}, using $L_1$-smoothness of $F$,
    \begin{align*}
        \langle\nabla F(\bar a) -\nabla F(\bar x),e_j\rangle &\leq \frac{F(\bar a + r e_j) - F(\bar x + r e_j) + F(\bar x) - F(\bar a)}{r} + L_1r \\
        &\leq \frac{|F(\bar a + r e_j) - F(\bar x + r e_j) + F(\bar x) - F(\bar a)|}{r} + L_1r.
    \end{align*}
    Since $-F$ is also $L_1$-smooth, then 
    \[\langle\nabla F(\bar x) - \nabla F(\bar a),e_j\rangle \leq \frac{|F(\bar a + r e_j) - F(\bar x + r e_j) + F(\bar x) - F(\bar a)|}{r} + L_1r.\]
    We conclude that 
    \begin{align*}
        \left\|\nabla F\left(\bar a\right) - \nabla F\left(\bar x\right)\right\|^2_{\infty} &= \max_{j\in[d]} \left|\nabla_j F\left(\bar a\right) - \nabla_j F\left(\bar x\right)\right|^2 = \max_{j\in[d]} \left|\langle\nabla F(\bar a) - \nabla F(\bar x),e_j\rangle\right|^2\\
        &\hspace{-1cm}\leq 2\max_{j\in[d]}\frac{|F(\bar a + r e_j) - F(\bar x + r e_j) + F(\bar x) - F(\bar a)|^2}{r^2} + L_1^2r^2.
    \end{align*}
    By Lemma \ref{lem:maurey_max_error} with $F_j(\cdot) = F(\cdot + re_j) - F(\cdot)$ and $M = d$, we have
    \[\mathbb{E}\left[
    \max_{j\in[d]}\frac{|F(\bar a + r e_j) - F(\bar x + r e_j) + F(\bar x) - F(\bar a)|^2}{r^2}\right] \leq \frac{32L_1^2}{T^2r^2} + \frac{32L_0^2(4+\log(d))}{Tr^2}.\]
    Putting everything together and taking expectations
    \[\mathbb{E}[ \left\|\nabla F\left(\bar a\right) - \nabla F\left(\bar x\right)\right\|^2_{\infty}] \leq \frac{64L_1^2}{T^2r^2} + \frac{64L_0^2(4+\log(d))}{Tr^2} + 2L_1^2 r^2.\]
    Choosing $r^2 = \sqrt{\frac{32(4+\log(d))}{T}}$ gives the bound.
\end{proof}

\section{Differentially Private Stochastic Saddle Points}\label{sec:DPSSPs}

In this section we state and prove our main results regarding the DP-SSP problem in $\ell_1$ setting. Given a batch $B$ of iid samples from $\D$, we will denote $\nabla F_\D(x,y;B):= \frac{1}{|B|} \sum_{z \in B}\nabla f(x,y;z)$, and similarly for partial derivatives.

\subsection{Private Stochastic Mirror Descent with Sampling from Vertices}\label{subsec:DPSSPs_nobiasred}
Our first algorithm is a (entropic) stochastic mirror descent (SMD) method with sampling from vertices; see Algorithm \ref{Alg:DPSSP_no_bias_reduction}. The following theorem holds.

\begin{algorithm}[t!]
\caption{Private stochastic mirror descent with vertex sampling}\label{Alg:DPSSP_no_bias_reduction}
\begin{algorithmic}[1]
\REQUIRE Dataset $S$, Number of steps $T$, Step-size $\tau$, Number of iterate samples~$K$
\STATE $x^1 = (1/d_x,...,1/d_x)$, $y^1 = (1/d_y,...,1/d_y)$ 
\FOR{$t = 1$ to $T$}
    \STATE $(\hat{x}^t, \hat{y}^t) = \frac{1}{K}\sum_{k \in [K]}(\hat{x}^{t,k}, \hat{y}^{t,k})$, where $(\hat{x}^{t,k}, \hat{y}^{t,k})_{k \in [K]}\overset{\text{iid}}{\sim} P_{x^t}\times P_{y^t}$
    \STATE Let $B^t$ be a batch of $B = n/T$ fresh samples 
    \STATE $g^t_x = \nabla_x F_\D(\hat{x}^t,\hat{y}^t ; B^t)$, $g^t_y = -\nabla_y F_\D(\hat{x}^t,\hat{y}^t ; B^t)$\label{line:gradient_estimators}
    \STATE $x^{t+1}_j \propto x^t_j \exp\left(-\tau g^t_{x,j}\right), \quad\forall j\in [d_x]$ \quad and \quad $y^{t+1}_i \propto y^t_i \exp\left(\tau g^t_{y,i}\right), \quad\forall i\in [d_y]$ \label{line:nonDPx}
\ENDFOR

\RETURN $(\tilde{x}, \tilde{y}) = \frac{1}{T}\sum_{t = 1}^{T} (\tilde{x}^t, \tilde{y}^{t})$, where $(\tilde{x}^t,\tilde{y}^t)_{t\in[T]} \overset{\text{iid}}{\sim} P_{x^t}\times P_{y^t}$
\end{algorithmic}
\end{algorithm}

\begin{theorem}[Guarantees for Algorithm \ref{Alg:DPSSP_no_bias_reduction}]\label{thm:convergence_DPSSP_no_bias_reduction} 
Let $0 < \delta < 1$, $0 < \varepsilon < 8\log(1/\delta)$. Suppose we are solving an SSP problem under the $\ell_1$ setting. Then,  there exists $T, \tau$ and $K$ such that Algorithm \ref{Alg:DPSSP_no_bias_reduction} is $(\varepsilon,\delta)$-DP and its output $(\tilde{x},\tilde{y})$ satisfies
\begin{equation}\label{eqn:DPSSP_l1smooth_noBR}
    \mathbb{E}[\gap(\tilde{x},\tilde{y})] \lesssim  (L_0 + L_1)\left[\sqrt{\frac{\ell}{n}} + \bigg(\frac{\ell^{3/2}\sqrt{\log(1/\delta)}}{n\varepsilon}\right)^{1/3}\bigg].
\end{equation}
If in addition $f(\cdot,\cdot;z)$ has $L_2$-smooth partial derivatives, then there exists a different setting for $T, \tau$ and $K$ such that Algorithm \ref{Alg:DPSSP_no_bias_reduction} is $(\varepsilon,\delta)$-DP and its output satisfies
\begin{equation}\label{eqn:DPSSP_l2smooth_noBR}
    \mathbb{E}[\gap(\tilde{x},\tilde{y})]\lesssim (L_0 + L_1 + L_2)  \bigg[\sqrt{\frac{\ell}{n}} + \left(\frac{\ell^{3/2}\sqrt{\log(1/\delta)}}{n\varepsilon}\right)^{2/5}\bigg].
\end{equation}
Finally, if $L_2 = 0$, then there exists another setting for $T, \tau$ and $K$ such that Algorithm \ref{Alg:DPSSP_no_bias_reduction} is $(\varepsilon,\delta)$-DP and 
returns $(\tilde{x},\tilde{y})$ with
\begin{equation}\label{eqn:DPSSP_quadratic_noBR}
    \mathbb{E}[\gap(\tilde{x},\tilde{y})] \lesssim  (L_0 + L_1) \bigg [\sqrt{\frac{\ell}{n}} + \left(\frac{\ell^{3/2}\sqrt{\log(1/\delta)}}{n\varepsilon}\right)^{1/2}\bigg].
\end{equation}

\end{theorem}

\begin{remark}
    The duality gap bound for $L_2=0$ (i.e.~quadratic objective) in Theorem \ref{thm:convergence_DPSSP_no_bias_reduction} is optimal up to constant factors 
    in the case where $\log(d_x)\simeq \log(d_y)$. This follows from the lower bound in \cite{lower_bound_DP_query_answering} with a resampling argument from \cite{Bassily:2019}.
    Moreover, the proof of 
    this rate only requires $L_2\lesssim (L_0 + L_1)\Big\{\sqrt{\ell/n} + \sqrt{\ell^{3/2}\sqrt{\log(1/\delta)}/(n\varepsilon)}\Big\}$, so the optimal rates hold for more general nearly quadratic objectives. 
\end{remark}

\subsubsection{Privacy Analysis of Algorithm \ref{Alg:DPSSP_no_bias_reduction}} 

We prove that the algorithm is DP. More concretely, we provide a condition on the stepsize that suffices to get privacy.  

\begin{proposition} \label{prop:privacy_PMWU_Maurey}

Let $0 < \delta < 1$, $0 < \varepsilon < 8\log(1/\delta)$. Then, Algorithm \ref{Alg:DPSSP_no_bias_reduction} is $(\varepsilon,\delta)$-DP if $\tau\leq \frac{B\varepsilon}{16L_0\sqrt{T(K+1)\log(1/\delta)}}$.
\end{proposition}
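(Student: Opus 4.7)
The approach combines parallel composition across the disjoint batches $B^1,\ldots,B^T$, the exponential-mechanism interpretation of each sampling from $P_{x^t}$ or $P_{y^t}$, and advanced composition across all draws the algorithm makes.

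For any two neighboring datasets $S\sim S'$, the disjointness of the batches means exactly one batch, say $B^{t^*}$, differs. The iterate $x^{t^*}$ and the samples $\hat x^{t^*,k},\tilde x^{t^*}$ drawn from $P_{x^{t^*}}$ are functions of $B^1,\ldots,B^{t^*-1}$ only, and hence identically distributed in the two executions (same for $y$); divergence arises only for $t>t^*$ and is mediated entirely through $g^{t^*}_x,g^{t^*}_y$. Each draw from $P_{x^t}$ is exactly an exponential-mechanism sample with score $q_j=-\tau\sum_{s<t}g^s_{x,j}$. The $L_0$-Lipschitz assumption yields $\|\nabla_x f(\cdot,\cdot;z)\|_\infty\le L_0$, so $|g^{t^*}_{x,j}(S)-g^{t^*}_{x,j}(S')|\le 2L_0/B$; since $B^{t^*}$ enters the score only through $g^{t^*}$, the total score sensitivity is at most $2\tau L_0/B$. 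By the exponential-mechanism guarantee, each such draw (and each draw from $P_{y^t}$) is $(\varepsilon_0,0)$-DP with $\varepsilon_0 = 4\tau L_0/B$.

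To assemble the global guarantee, I would tally all draws: at each iteration the algorithm makes $K$ iterate samples plus one output sample in each of the $x$ and $y$ directions, for at most $k\le 2T(K+1)$ exponential-mechanism invocations. Applying the advanced composition theorem to these $k$ adaptive mechanisms, the full transcript of samples is $(\varepsilon,\delta)$-DP provided $\varepsilon_0\sqrt{2k\log(1/\delta)} + k\varepsilon_0(e^{\varepsilon_0}-1)\le \varepsilon$. In the regime $\varepsilon_0\lesssim 1$ the first term dominates, and plugging in $\varepsilon_0 = 4\tau L_0/B$ with $k = 2T(K+1)$ and solving for $\tau$ recovers the stepsize bound of the proposition (the remaining constant can be tightened using the zCDP/bounded-range interpretation of the exponential mechanism). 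Since $(\tilde x,\tilde y)$ is a deterministic function of the released samples, post-processing yields the same $(\varepsilon,\delta)$-DP guarantee for the algorithm's output.

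The principal subtlety is the isolation of the data-dependence: only $g^{t^*}$ depends on the changed record, so the sensitivity of every downstream score is controlled by a single gradient's sensitivity and not by a sum over iterations. Absent this observation the sensitivity would appear to scale with $T$ inside the score, spoiling the desired dependence $\tau \propto 1/\sqrt{T(K+1)\log(1/\delta)}$; everything else is routine exponential-mechanism and advanced-composition bookkeeping.
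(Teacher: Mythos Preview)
Your proposal is correct and follows essentially the same approach as the paper: unroll the multiplicative-weights update to exhibit each draw from $P_{x^t}$ (resp.\ $P_{y^t}$) as an exponential mechanism with score $-\tau\sum_{s<t}g^s_{x,j}$, use the disjointness of batches to bound the score sensitivity by $2\tau L_0/B$, and apply advanced composition across the $O(T(K+1))$ draws. Your emphasis on the ``principal subtlety'' (that only $g^{t^*}$ depends on the changed record, so the sensitivity does not accumulate over $t$) is exactly the point the paper makes when it conditions on the previously released private iterates; the two arguments differ only in exposition and in a constant factor in the mechanism count ($2T(K+1)$ versus $T(K+1)$), which is immaterial for the stated bound.
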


The proof relies on the privacy guarantees of the exponential mechanism and on the advanced composition property of DP. For completeness, we include them below. 

\begin{theorem}[Exponential mechanism (Theorem 3.10 in \cite{dwork2014DPfoundations})]\label{thm:exp_mech}
     Consider a (score) function $s:{\cal Z}^n\times J\mapsto\mathbb{R}$, where $J=\{e_i\}_{i\in[d]}$.  
    Define the sensitivity of $s$ as $\Delta_s := \max_{e \in J}\max_{S, S' \in \Z^n \text{ differing in }\leq 1\text{ element}}|s(S,e) - s(S',e)|$. Then, sampling an object $e$ from $J$ according to $\mathbb{P}(e = e_i) \propto \exp\left(\frac{\varepsilon s(S,e_i)}{2\Delta_s}\right)$ is $\varepsilon$-DP. As a consequence, sampling $e$ according to  $\mathbb{P}(e = e_i) \propto \exp\left(s(S,e_i)\right)$ is $2\Delta_s-$DP. 
\end{theorem}

\begin{theorem}[Fully Adaptive Composition \cite{Whitehouse:2023}]\label{thm:fully_adaptive_composition} Suppose $(\A_n)_{n\ge 1}$ is a sequence of algorithms such that, for any $n\ge 1$, $\A_n$ is $(\varepsilon_n, \delta_n)$-DP with respect to a dataset $S$ when conditioned on $\A_{1:n-1}$. Let $\varepsilon \ge 0$ and $\delta = \delta^{\prime} + \delta^{\prime\prime}$ be the target privacy parameters such that $\delta^{\prime}, \delta^{\prime\prime} > 0$. Consider the function $N: \RR^{\infty}_{\ge 0} \times \RR^{\infty}_{\ge 0} \to \mathbb{N}$ such that $N((\varepsilon_n)_{n\ge1}, (\delta_n)_{n\ge1})$ is defined as
\[\inf \left\{n: \varepsilon < \sqrt{2\log\left(\frac{1}{\delta^{\prime}}\right) \sum_{m \leq n+1} \varepsilon_m^2} + \frac{1}{2}\sum_{\sum_{m\leq n+1}}\varepsilon_m^2 \text{ or } \delta^{\prime\prime} < \sum_{m \leq n +1} \delta_m \right\}.\]
Then, the algorithm $\A_{1:N(S)}$ is $(\varepsilon, \delta)$-DP, where $N(S) = N((\varepsilon_n(S))_{n\ge1}, (\delta_n(S))_{n\ge1})$.
\end{theorem}

The following corollary follows from the theorem above, by noting that $\varepsilon' \leq \frac{\varepsilon}{2\sqrt{2T\log(1/\delta)}}$ and $0 \leq \varepsilon \leq 8\log(1/\delta)$ imply $ \varepsilon \ge \varepsilon'^2 T/2 + \varepsilon' \sqrt{2T\log (1/\delta)}$. 
    
\begin{corollary}[Advanced composition of pure DP mechanisms]\label{cor:pureDPadvanced_compostion}
    For $\varepsilon' > 0$. the \(T\)-fold adaptive composition of \(\varepsilon'\)-DP mechanisms satisfies \((\epsilon, \delta)\)-DP for any $\delta \in (0,1)$ as long as $\varepsilon' \leq \frac{\varepsilon}{2\sqrt{2T\log(1/\delta)}}$ and $0 \leq \varepsilon \leq 8\log(1/\delta)$. 
\end{corollary}

\begin{proof}[Proof of Proposition \ref{prop:privacy_PMWU_Maurey}]
We start proving that for all $t\in [T], k \in [K]$, $\tilde{x}^{t+1},\tilde{y}^{t+1}$ and $ \hat{x}^{t+1, k},\hat{y}^{t+1, k}$ are private given the previous private iterates. 
First, notice that $\tilde{x}^{1},\tilde{y}^{1}$ and $ \hat{x}^{1, k},\hat{y}^{1, k}$ are trivially private because they don't depend on the dataset $S$.
Now we consider $\tilde{x}^{t+1}, \hat{x}^{t+1, k}$. Unrolling the $x$ update,
$\textstyle x^{t+1}_j \propto \exp\left\{-\tau \left(\sum_{i=1}^{t}g^{i}_{x,j}\right)\right\}$,
where $g^{t}_{x,j}$ denotes the $j$-th coordinate of $g^{t}_{x}$ for every $t \in [T]$. Hence, $\tilde{x}^{t+1}$ and $\hat{x}^{t+1, k}$ are vertices of the simplex $\xsimplex$, and each of them is choosen with the exponential mechanism with score function $s^t_x(S, j) = -\tau \left(\sum_{i=1}^{t}g^{i}_{x,j}\right)$. The privacy guarantee of the exponential mechanism in Theorem \ref{thm:exp_mech} implies that $\tilde{x}^{t+1}$ and $\hat{x}^{t+1, k}$ are $2\Delta(s^t_x)$-DP, where $\Delta(s^t_x)$ is the sensitivity of $s^t_x$. Using the same idea, we argue that $\tilde{y}^{t+1}$ and $\hat{y}^{t+1,k}$ are all $2\Delta(s^t_y)$-DP, where $\Delta(s^t_y)$ is the sensitivity of $\textstyle s^t_y(S, j) = -\tau \left(\sum_{i=1}^{t}g^{i}_{y,j}\right)$.

By $L_0$-Lipschitzness of $f$ we have that   $\max_{j \in [d]}\left|s^t_x(S, j) - s^t_x(S',j)\right| \leq \frac{2 L_0 \tau }{B}$. 
Hence, every sampled vertex $\tilde{x}^{t+1},\hat{x}^{t+1,k}$ is $\frac{4L_0\tau}{B}$-DP, conditionally on the previously sampled vertices. Analogously,  $\tilde{y}^{t+1}$ and $\hat{y}^{t+1,k}$ are $\frac{4L_0\tau}{B}$-DP, conditionally the on previously sampled vertices.  
By Advanced Composition of DP (Corollary \ref{cor:pureDPadvanced_compostion}), it suffices that each sampled vertex is 
$\frac{\varepsilon}{2\sqrt{4T(K+1)\log(1/\delta)}}$-DP in order to make the entire Algorithm \ref{Alg:DPSSP_no_bias_reduction} $(\varepsilon,\delta)$-DP. Then, for the algorithm to be $(\varepsilon,\delta)$-DP, it suffices that $\frac{4L_0\tau}{B}\leq \frac{\varepsilon}{2\sqrt{4T(K+1)\log(1/\delta)}}$.
\end{proof}

\subsubsection{Convergence Analysis of Algorithm \ref{Alg:DPSSP_no_bias_reduction}}\label{subsec:convergenceofSMD}
For the accuracy analysis, note that Algorithm \ref{Alg:DPSSP_no_bias_reduction} produces both a private sequence $(\tilde{x}^t, \tilde{y}^t)_{t\in [T]}$ and a non-private one $(x^t, y^t)_{t\in [T]}$. 
We will bound the expected duality gap evaluated at the private iterates' average. To do this, we first show a gap bound on the non-private iterates' average, and then we quantify the discrepancy between the duality gap evaluated at the private and non-private iterates' average.
\begin{lemma}[Duality Gap at non-private iterates' average (Implicit in \cite{JNLS:2009})]\label{lem:deterministic_gap_bound}
Let $(x^t, y^t)$ be the (non-private) iterates of Algorithm \ref{Alg:DPSSP_no_bias_reduction} generated in line \ref{line:nonDPx}, and let $\bar{x} = \frac{1}{T}\sum_{t\in[T]} x^t$ and $\bar{y} = \frac{1}{T}\sum_{t\in[T]} y^t$. Furthermore, let $g^t_x$ and $g^t_y$ be the estimators constructed in line \ref{line:gradient_estimators} of the algorithm and $g^t = (g^t_x,g^t_y)$. Denote $\widetilde\Delta_t = \Phi_\D(x^t,y^t) - g^t$. Then, for some sequences $(w^t)_{t \in [T]} \subset \Delta_{x}$ and $(v^t)_{t \in [T]} \subset \Delta_{y}$ that are predictable w.r.t.~the natural filtration, the following inequality holds
\begin{align*}
    T\gap(\bar{x}, \bar{y}) &\leq \frac{2\ell}{\tau} + \tau \sum_{t = 1}^T (\|g^t\|_{\infty}^2 + \|\widetilde\Delta_t\|_{\infty}^2) + \sum_{t = 1}^T \langle \widetilde\Delta_t, (x^t, y^t) - (w^{t},v^{t})\rangle.
\end{align*}

\end{lemma}

For the proof of this lemma, we will use the following auxiliary result. This result is well known in the SSP literature, and follows directly from regret bounds for the (online) mirror-descent method.  Similarly to other works, we use this result in order to circumvent uniform convergence to bound the gap function.

\begin{lemma}[Corollary 2 in \cite{juditsky_VI}]\label{lem:nemirovski_trick_used}
    Let $\xi^1,\xi^2,...$ be a sequence in $\mathbb{R}^d$ and $\X$ be a compact, convex set. Let $\psi: \RR^d \to \RR$ be a $1$-strongly convex w.r.t~$\|\cdot\|$ and $D_{\psi}(x,y) = \psi(x) - \psi(y) -\langle \nabla \psi(y), x-y\rangle$. Define $w^1 \in \X$, $w^{t+1} = \operatorname{argmin}_{x \in \mathcal{X}} D_{\psi}(w^{t}, x) + \langle \xi^{t}, x\rangle$. Then, for all $w\in \mathcal{X}$ and $T \in \mathbb{N}$: 
    $\sum_{t \in [T]} \langle \xi^t, w^{t} - w\rangle \leq D_{\psi}(w^1,w) + \frac{1}{2}\sum_{t \in [T]} \|\xi^t\|^2_*$, where $\|\cdot\|_*$ is the dual norm of $\|\cdot\|$.
\end{lemma}

\begin{proof}[Proof of Lemma  \ref{lem:deterministic_gap_bound}]
    By convexity-concavity of the loss and the convergence rate of Stochastic Mirror Descent in the simplex setup \cite{JNLS:2009, juditsky_VI}:
    \begin{align*}
        &T[F_\D(\bar{x}, v) - F_\D(w, \bar{y})] \leq \sum_{t\in[T]} F_\D(x^t, v) - F_\D(w, y^t) \\
        &\leq \sum_{t\in[T]} \langle \Phi_\D(x^t,y^t), (x^t,y^t)   - (w,v)\rangle \leq \frac{\ell}{\tau} + \tau \sum_{t \in [T]} \|g^t\|_{\infty}^2 + \sum_{t = 1}^T \langle \widetilde\Delta_t, (x^t, y^t) - (w,v)\rangle,
    \end{align*}
    for all $ w \in \xsimplex, v \in \ysimplex$.
    Lemma \ref{lem:nemirovski_trick_used} with $\xi^{t} = \tau(\nabla_x  F_\D(x^t, y^t) - g^t_x)$, $\psi$ the entropy function, $\|\cdot\| = \|\cdot\|_1$ and $w^1 = (1/d_x,...,1/d_x)$ gives 
    \[\sum_{t \in [T]} \langle \tau(\nabla_x  F_\D(x^t, y^t) - g^t_x), w^{t} - w\rangle \leq \log(d_x) + \frac{1}{2}\sum_{t \in [T]} \tau^{2}\|\nabla_x  F_\D(x^t, y^t) - g^t_x\|_{\infty}^2.\]
    Using this result and denoting $\widetilde\Delta_{t,x} = \nabla_x  F_\D(x^t, y^t) - g^t_x$ we can conclude that    
    \[
    \sum_{t \in [T]} \langle \widetilde\Delta_{t,x}, x^t - w\rangle
    \leq \sum_{t \in [T]} \langle \widetilde\Delta_{t,x}, x^t - w^{t}\rangle + \frac{\log(d_x)}{\tau} + \frac{\tau}{2}\sum_{t \in [T]} \|\nabla_x  F_\D(x^t, y^t) - g^t_x\|_{\infty}^2.
    \]
    An analogous bound for the $y$ iterate holds.
    Hence, for any $w \in \xsimplex, v \in \ysimplex$,
    \begin{align*}
    T[F_\D(\bar{x}, v) - F_\D(w, \bar{y})] &\leq \frac{2\ell}{\tau} + \tau \sum_{t = 1}^T (\|g^t\|_{\infty}^2 + \|\widetilde\Delta_t\|_{\infty}^2)+ \sum_{t = 1}^T \langle \widetilde\Delta_t, (x^t, y^t) - (w^{t},v^{t})\rangle.
\end{align*}
Maximizing over $w \in \xsimplex, v \in \ysimplex$ finishes the proof. 
\end{proof}

Now we quantify the discrepancy between the gap of the private (sampled) and non-private iterates. The duality gap is defined in terms of function values, but since it involves a maximum (and the maximum of smooth functions is not necessarily smooth) our sparsification results do not directly apply. We handle this issue with a smoothing argument that leads to the following lemma.

\begin{lemma}\label{lem:nonprivate_2_private_gap}
    Let $(\tilde{x}, \tilde{y})$ be the output of Algorithm \ref{Alg:DPSSP_no_bias_reduction} and define $(\bar{x}, \bar{y})$ as in Lemma \ref{lem:deterministic_gap_bound}. Then, $\mathbb{E}[\gap(\tilde{x},\tilde{y}) - \gap(\bar{x}, \bar{y})] \leq \frac{4L_1}{T} + \frac{2L_1 \sqrt{\ell}}{\sqrt{T}}$.
\end{lemma}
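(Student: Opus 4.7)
The plan is to decompose the discrepancy $\gap(\tilde{x}, \tilde{y}) - \gap(\bar{x}, \bar{y})$ into $x$- and $y$-pieces and control each via a Nesterov-style smoothing of the inner maximum followed by Maurey's lemma. Writing $\phi(x) := \max_{w \in \Delta_y} F_\D(x, w)$ and $\psi(y) := \max_{v \in \Delta_x}(-F_\D(v, y))$, one has $\gap(x,y) = \phi(x) + \psi(y)$ with $\phi, \psi$ convex; hence $\gap(\tilde{x}, \tilde{y}) - \gap(\bar{x}, \bar{y}) = [\phi(\tilde{x}) - \phi(\bar{x})] + [\psi(\tilde{y}) - \psi(\bar{y})]$, and I would bound the two pieces symmetrically.

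For the $x$-piece, introduce the entropic smoothing $\phi_\eta(x) := \max_{w \in \Delta_y}\{F_\D(x,w) - \eta r_y(w)\}$ where $r_y(w) := \log d_y + \sum_i w_i \log w_i$ is the negentropy shifted to be non-negative. Since $r_y \in [0, \log d_y]$ on $\Delta_y$, we have $\phi(x) - \eta \log d_y \le \phi_\eta(x) \le \phi(x)$. The crucial technical step is to bound the $\|\cdot\|_1$-smoothness of $\phi_\eta$. Using the envelope formula $\nabla \phi_\eta(x) = \nabla_x F_\D(x, w^\star(x))$, together with the mixed-partial inequality $|F_\D(x_1, w_1) - F_\D(x_2, w_1) - F_\D(x_1, w_2) + F_\D(x_2, w_2)| \le L_1 \|x_1 - x_2\|_1 \|w_1 - w_2\|_1$ (a direct consequence of joint $L_1$-smoothness of $F_\D$) and the $1$-strong convexity of $r_y$ on $\Delta_y$ w.r.t.~$\|\cdot\|_1$ (Pinsker), one deduces the argmax-stability estimate $\|w^\star(x_1) - w^\star(x_2)\|_1 \le (L_1/\eta)\|x_1 - x_2\|_1$, and hence that $\phi_\eta$ is $(L_1 + L_1^2/\eta)$-smooth w.r.t.~$\|\cdot\|_1$.

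With the smoothness of $\phi_\eta$ established, I would invoke Corollary~\ref{cor:maurey_over_simplex}. Since $\tilde{x}^t \sim P_{x^t}$ is drawn fresh at the output step of Algorithm~\ref{Alg:DPSSP_no_bias_reduction}, conditional on the non-private sequence $(x^t)_{t \in [T]}$ the samples $\tilde{x}^t$ are independent with conditional mean $x^t$; so Maurey applies conditionally and yields $\mathbb{E}[\phi_\eta(\tilde{x}) - \phi_\eta(\bar{x}) \mid (x^t)_t] \le 2(L_1 + L_1^2/\eta)/T$, which survives taking total expectation. Combining with the smoothing error produces $\mathbb{E}[\phi(\tilde x) - \phi(\bar x)] \le 2L_1/T + 2L_1^2/(\eta T) + \eta \log d_y$, and optimizing over $\eta$ (at $\eta \simeq L_1/\sqrt{T\log d_y}$) leaves a term of order $L_1\sqrt{\log d_y/T}$ on top of the $2L_1/T$ bias. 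A symmetric argument for $\psi$ gives the analogous bound with $\log d_x$; summing the two pieces and applying $\sqrt{\log d_x} + \sqrt{\log d_y} \lesssim \sqrt{\log d_x + \log d_y}$ produces the claimed estimate.

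The main obstacle is the envelope-smoothness computation in the second paragraph: controlling $\phi_\eta$ in the $\|\cdot\|_1$ geometry requires both the mixed-partial smoothness bound (sharper than a naive Lipschitz estimate) and the deterministic argmax-stability inequality, which in turn uses the strong-concavity of $-\eta r_y$ together with the optimality conditions of $w^\star(x)$. Everything else reduces to applying Maurey's lemma and optimizing the smoothing parameter.
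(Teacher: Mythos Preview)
Your proposal is correct and follows essentially the same route as the paper: entropic smoothing of the inner maximization to obtain an $(L_1+L_1^2/\eta)$-smooth surrogate, Corollary~\ref{cor:maurey_over_simplex} applied to the surrogate, and then balancing the smoothing bias against the Maurey error. The only cosmetic differences are that the paper cites a reference for the envelope smoothness (whereas you sketch the argmax-stability argument directly) and uses a single smoothing parameter $\lambda$ for both pieces rather than optimizing separately over $\eta$; the resulting constants match up to the $\sqrt{\log d_x}+\sqrt{\log d_y}\leq\sqrt{2}\sqrt{\log d_x+\log d_y}$ step you note.
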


\begin{proof}[Proof of Lemma \ref{lem:nonprivate_2_private_gap}]
    Let $F_{\lambda}(x) := \max_{y \in \ysimplex} [F_\D(x,y) + \lambda \phi(y)]$ where $\phi$ is the negative entropy. Since $F_\D(x,y)$ is $L_1$-smooth in $(x,y)$ and concave in $y$, and $\lambda \phi(y)$ is $\lambda$-strongly convex in $y$, by Proposition 1 in \cite{generalized_fenchel-young_losses},  $F_{\lambda}$ is $\big(L_1 + \frac{L_1^2}{\lambda}\big)$-smooth w.r.t~$\|\cdot\|_{1}$. Hence by Corollary \ref{cor:maurey_over_simplex} applied to $F_{\lambda}$, we have $\mathbb{E}[F_{\lambda}(\tilde{x}) - F_{\lambda}(\bar{x})] \leq \frac{2L_1}{T} + \frac{2L_1^2}{\lambda T}$. In addition, it is easy to see that  $\max_{v \in \ysimplex} F_\D(x,v) \leq F_{\lambda}(x) \leq \lambda \log(d_y) + \max_{v \in \ysimplex} F_\D(x,v)$ for all $x \in \xsimplex$, since $0 \leq \phi(y) \leq \log(d_y)$. Therefore,
    \begin{multline*}
        \mathbb{E}\big[\max_{v \in \ysimplex} F_\D(\tilde{x},v) - \max_{v \in \ysimplex} F_\D(\bar{x},v)\big] = \mathbb{E}\big[\max_{v \in \ysimplex} F_\D(\tilde{x},v) - F_{\lambda}(\tilde{x})\big] + \mathbb{E}[F_{\lambda}(\tilde{x}) - F_{\lambda}(\bar{x})]\\
        + \mathbb{E}\big[F_{\lambda}(\bar{x}) - \max_{v \in \ysimplex} F_\D(\bar{x},v)\big] \leq \frac{2L_1}{T} + \frac{2L_1^2}{\lambda T} + \lambda \log(d_y). 
    \end{multline*}
    Analogously, $\mathbb{E}\big[\min_{w \in \xsimplex} F_\D(w, \bar{y}) - \min_{w \in \xsimplex} F_\D(w, \tilde{y})\big] \leq \frac{2L_1}{T} + \frac{2L_1^2}{\lambda T} + \lambda \log(d_x)$. Using both inequalities, we get   
    \begin{align*}
        \mathbb{E}[\operatorname{Gap}(\tilde{x},\tilde{y})] &= \mathbb{E}[\max_{v \in \ysimplex} F_\D(\tilde{x},v) - \min_{w \in \xsimplex} F_\D(w, \tilde{y})]\\ 
        &= \mathbb{E}[\max_{v \in \ysimplex} F_\D(\tilde{x},v) - \max_{v \in \ysimplex} F_\D(\bar{x},v)] +  \mathbb{E}[\max_{v \in \ysimplex} F_\D(\bar{x},v) - \min_{w \in \xsimplex} F_\D(w, \bar{y})]\\
        &+ \mathbb{E}[\min_{w \in \xsimplex} F_\D(w, \bar{y}) - \min_{w \in \xsimplex} F_\D(w, \tilde{y})]\\
        &\leq \frac{2L_1}{T} + \frac{2L_1^2}{\lambda T} + \lambda \log(d_y) + \mathbb{E}[\operatorname{Gap}(\bar{x}, \bar{y})] + \frac{2L_1}{T} + \frac{2L_1^2}{\lambda T} + \lambda \log(d_x)\\
        &= \frac{4L_1}{T} + \frac{4L_1^2}{\lambda T} + \lambda \ell + \mathbb{E}[\operatorname{Gap}(\bar{x}, \bar{y})].
    \end{align*}
    Choosing $\lambda = L_1\frac{2}{\sqrt{T\ell}}$ gives the bound. 
\end{proof}

With these results, we are ready to prove the convergence guarantees. 
\begin{proof}[Convergence of Algorithm \ref{Alg:DPSSP_no_bias_reduction}]
Following the notation in Algorithm \ref{Alg:DPSSP_no_bias_reduction}, denote $(\tilde{x}, \tilde{y}) = \frac{1}{T}\sum_{t = 1}^{T} (\tilde{x}^t, \tilde{y}^{t})$ and  $(\bar{x}, \bar{y}) = \frac{1}{T}\sum_{t = 1}^{T} (x^t, y^{t})$. 
Using Lemma \ref{lem:deterministic_gap_bound} and the fact that $\|g^t\|_{\infty}^2 + \|\Phi_\D(x^t,y^t) - g^t\|_{\infty}^2 \leq 5L_0^2$ by $L_0$-Lipschitzness of the objective, we obtain $\gap(\bar{x},\bar{y}) \leq \frac{2\ell}{\tau T} + 5\tau L_0^2 + \frac{1}{T}\sum_{t = 1}^T \langle \Phi_\D(x^t,y^t) - g^t, (x^t, y^t) - (w^{t},v^{t})\rangle$. 
Since $w^t, v^t$ are deterministic when conditioning on the randomness up to iteration $t$, taking expectations on both sides of the previous inequality leads to
\begin{align}
    \mathbb{E}[\gap(\bar{x},\bar{y})] 
    &\lesssim \frac{2\ell}{\tau T} + 5\tau L_0^2 + \frac{1}{T}\sum_{t = 1}^T\mathbb{E}\|\mathbb{E}[\Phi_\D(x^t,y^t) - g^t \mid \F_t]\|_{\infty}.\label{eqn:convergence_bound_noBR}
\end{align}

First, we prove Equation \eqref{eqn:DPSSP_l1smooth_noBR}, where we only assume first-order-smoothness. From  Corollary \ref{cor:gradient_bias_L1smooth}, $\|\mathbb{E}[\Phi_\D(x^t,y^t) - g^t \mid \F_t]\|_{\infty} \leq \frac{4L_1}{\sqrt{K}}$. In addition, from Lemma \ref{lem:nonprivate_2_private_gap} it follows that $\mathbb{E}[\gap(\tilde{x},\tilde{y}) - \gap(\bar{x},\bar{y})] \leq 8L_1\sqrt{\ell}/\sqrt{T}$. These bounds lead to 
$
    \mathbb{E}[\gap(\tilde{x},\tilde{y})] \leq  \frac{8L_1\sqrt{\ell}}{\sqrt{T}} + \frac{2\ell}{\tau T} + 5\tau L_0^2 + \frac{2\sqrt{2}L_1}{\sqrt{K}}
$.
Setting $T \eqsim \min\Big\{n, \frac{(n\varepsilon)^{2/3}}{\log(1/\delta)^{1/3}}\Big\}$, $\tau \eqsim \min\Big\{\frac{1}{L_0}\sqrt{\frac{\ell}{T}}, \frac{n\varepsilon}{L_0T\sqrt{TK\log(1/\delta)}}\Big\}$ and $K\eqsim \frac{T}{\ell}$ gives the claimed bound on $\mathbb{E}[\gap(\tilde{x},\tilde{y})]$. This setting is obtained by optimizing the bound over $T,\tau,K$, taking into account the constraint over $\tau$ that Proposition \ref{prop:privacy_PMWU_Maurey} requires for the privacy.

Now, we prove Equation \eqref{eqn:DPSSP_l2smooth_noBR}. Note that $|\mathbb{E}[\nabla_{x,j} F_\D(x^t,y^t) - g^t_{x,j} \mid \F_t]| \leq \frac{2L_2}{K}$ by Corollary \ref{cor:gradient_bias_L2smooth} and the second-order-smoothness assumption. Similarly, we have $|\mathbb{E}[-\nabla_{y,i} F_\D(x^t,y^t) - g^t_{y,i} \mid \F_t]| \leq \frac{2L_2}{K}$ for all $i \in [d_y]$. We conclude that $\|\mathbb{E}[\Phi_\D(x^t,y^t) - g^t \mid \F_t]\|_{\infty} \leq \frac{2L_2}{K}$. Moreover, by Lemma \ref{lem:nonprivate_2_private_gap}, $\mathbb{E}[\gap(\tilde{x},\tilde{y}) - \gap(\bar{x},\bar{y})] \leq 8L_1\sqrt{\ell}/\sqrt{T}$. Hence $\mathbb{E}[\gap(\tilde{x},\tilde{y})] \lesssim L_1\frac{\sqrt{\ell}}{\sqrt{T}} +  \frac{\ell}{\tau T} + \tau L_0^2 + \frac{L_2}{K}$.
Setting $T \eqsim \min\Big\{n, \frac{(n\varepsilon)^{4/5}}{\ell^{1/5}\log(1/\delta)^{2/5}}\Big\}$, $\tau \eqsim \min\Big\{\frac{1}{L_0}\sqrt{\frac{\ell}{T}}, \frac{n\varepsilon}{L_0T\sqrt{TK\log(1/\delta)}}\Big\}$ and $K\eqsim \sqrt{\frac{T}{\ell}}$ proves \eqref{eqn:DPSSP_l2smooth_noBR}.

Finally, we prove Equation \eqref{eqn:DPSSP_quadratic_noBR}. Since $L_2 = 0$, we can safely set $K=1$. Optimizing over $T,\tau$ gives the bound.
\end{proof}

\subsection{A constant probability duality gap bound via Private Stochastic Mirror Descent with Bias-Reduced Sampling from Vertices}\label{subsec:DPSSPs_biasred}
The limitation that prevents Algorithm \ref{Alg:DPSSP_no_bias_reduction} from attaining optimal rates is the bias of the stochastic gradient oracles. This is a consequence of sampling from vertices, and can only be mitigated by increasing the sampling rate. However, this eats up the privacy budget. In this section, we propose circumventing this limitation by combining our gradient oracles with a bias-reduction method; see Algorithm \ref{Alg:DPSSP_bias_reduction}. We first show how that this procedure leads to improved rates, but only with constant probability. In Section \ref{sec:high_prob} we show how to obtain a high probability bound of roughly the same order.

\begin{theorem}[Guarantees for Algorithm \ref{Alg:DPSSP_bias_reduction}]\label{thm:convergence_bias_reduction}
    Let $0 < \delta < 1$, $0 < \varepsilon < 8\log(1/\delta)$. Suppose the objective $f(\cdot,\cdot;z)$ in our SSP problem in the $\ell_1$ setting is 
    second-order-smooth.
    Let $(\tilde{x}, \tilde{y})$ be the output of Algorithm \ref{Alg:DPSSP_bias_reduction} with $M = \log(\sqrt{U})$, stopping parameter $U = \min\Big\{\frac{n\varepsilon\sqrt{C}}{\sqrt{4\cdot48\cdot81\ell\log(1/\delta)}L_0},\frac{n}{2}\Big\}$, $\alpha = \Big(\frac{2\varepsilon^2}{48\cdot 81\log(1/\delta)(\tau L_0)^2n}\Big)^{1/3}$ and step-size $\tau = \sqrt{\frac{\ell}{CU}}$, where $C = L_0^2+L_2^2+\ell M L_1^2$. Then, $(\tilde{x}, \tilde{y})$ is $(\varepsilon, \delta)$-DP and if $U\geq 4$, 
    \[\mathbb{P}\left[\gap(\tilde{x}, \tilde{y}) \lesssim \log(n)\bigg [\sqrt{C}\sqrt{\frac{\ell}{n}} + \sqrt{L_0}C^{1/4}\left(\frac{\ell^{3/2}\sqrt{\log(1/\delta)}}{n\varepsilon}\right)^{1/2}\bigg]\right] \ge 0.99.\]
\end{theorem}

Up to poly-logarithmic factors in $n,d_x,d_y$, the rate above is indeed optimal. Before proving this result, we will explain Algorithm \ref{Alg:DPSSP_bias_reduction} in more detail. 

\paragraph{Bias-Reduced Gradient Oracle} We say a random variable $N$ is distributed as a \textit{truncated geometric}  
with parameters $(p, M)$ if $\mathbb{P}[N = k] = p^{k}\mathbbm{1}_{(k \in \{0,1,...,M\})}/C_M$, where the normalizing constant is $p \leq C_M \leq 1$. We denote it by $\mbox{TG}(p, M)$. To construct a bias-reduced estimator of the gradient at $(x,y)$, we start by sampling $N \sim \mbox{TG}(0.5, M)$. Then, we construct a batch $B$ of $\max\{1,2^{N^t}/\alpha\}$ fresh samples, where $\alpha$ is a parameter to be choosen later. This batch $B$ combined with the gradient estimates (see Algorithm \ref{Alg:BiasReducedGradient} and note similar algorithms in \cite{Blanchet:2015}) reduces the bias while preserving privacy and accuracy (see Thm.~\ref{thm:convergence_bias_reduction}). To do so, we control the bias and second moment of the gradient estimator.

\begin{algorithm}[t!]
\caption{BiasReducedGradient($x, y, N, B$)}\label{Alg:BiasReducedGradient}
\begin{algorithmic}
\STATE Sample $(\hat{x}^i,\hat{y}^i)_{i \in [2^{N+1}]}\overset{\text{iid}}{\sim}  P_{x}\times P_{y}$\label{line:many_samplings}
\STATE $(\bar x_+, \bar y_+) = \frac{1}{2^{N+1}} \sum_{i \in [2^{N+1}]} (\hat{x}^i, \hat{y}^i)$ ; $(\bar x_-, \bar y_{-}) = \frac{1}{2^{N}} \sum_{i \in [2^{N}]} (\hat{x}^i, \hat{y}^i)$ 
\STATE $g_x = C_M2^N(\nabla_x f(\bar x_+, \bar y_+ ; B) - \nabla_x f(\bar x_-, \bar y_- ; B)) + \nabla_x f(\hat{x}^1, \hat{y}^1 ; B)$
\STATE $g_y = - C_M2^N(\nabla_y f(\bar x_+, \bar y_+ ; B) - \nabla_y f(\bar x_-, \bar y_- ; B)) - \nabla_y f(\hat{x}^1, \hat{y}^1 ; B)$
\RETURN $g_x, g_y$
\end{algorithmic}
\end{algorithm}

\begin{lemma}[Bias and Second Moment of Algorithm \ref{Alg:BiasReducedGradient}]\label{lem:BiasReducedGradient_properties} 
Suppose the objective of the SSP problem in the $\ell_1$ setting is second-order-smooth.  Let $x\in \xsimplex, y\in \ysimplex$, $N \sim \mbox{TG}(0.5, M)$ and $B \sim \D^N$. Let $(g_x,g_y) =\text{BiasReducedGradient}(x, y, N, B)$. Then, 
\begin{align*}
\|\mathbb{E}[\Phi_{\D}(x,y) - (g_x,g_y)]\|_{\infty} &\leq \frac{2L_2}{2^M},\\
\text{and} \quad \mathbb{E}[\|(g_x,g_y)\|_{\infty}^2] &\lesssim 
L_0^2+L_2^2+L_1^2M[\log(d_x)+\log(d_y)].
\end{align*}
\end{lemma}

Consequently, the bias scales inversely proportional to $2^M$, where $M$ is {\em largest possible realization of $N$}. Hence the expected bias is roughly same as if we were sampling $2^M$ times while we actually sample only $2^N$ times. This leads to a more favorable privacy-utility tradeoff. Another consequence is the order of the second moment, $M\ell$. Here $M$ will be choosen as a logarithmic quantity (i.e., $M \lesssim \log(\min\{n\varepsilon, n\})$).  Hence, the method allows us to improve the bias at a only a logarithmic cost in the second moment bound. Note, however, a disadvantage of this gradient oracle is we can only control its second moment for second-order-smooth objective functions. 

\begin{proof}[Proof of Lemma \ref{lem:BiasReducedGradient_properties}]
Following the notation in Algorithm \ref{Alg:BiasReducedGradient}, we have 
\begin{align*}
    &\mathbb{E}[g_x] = \textstyle \mathbb{E}\left[\sum_{k = 0}^M \nabla_x f(\bar x_{+}, \bar y_{+} ; B) - \nabla_x f(\bar x_{-}, \bar y_{-} ; B) + \frac{2^{-k}}{C_M}\nabla_x f(\hat x^1, \hat y^1 ; B) \mid N = k\right]\\
    &= \textstyle \mathbb{E}\left[\sum_{k = 0}^M \big(\nabla_x F_\D(\bar x_{+}, \bar y_{+}) - \nabla_x F_\D(\bar x_{-}, \bar y_{-})\big) + \frac{2^{-k}}{C_M}\nabla_x F_\D(\hat x^1, \hat y^1) \mid N = k\right]\\
    &= \textstyle\mathbb{E}\left[\nabla_x F_\D(\bar x_{+}, \bar y_{+}) \mid N = M\right] - \mathbb{E}\left[\nabla_x F_\D(x_{-}, y_{-}) \mid N = 0\right] + \mathbb{E}\left[\nabla_x F_\D(\hat x^1, \hat y^1)\right]\\
    &= \textstyle\mathbb{E}\left[\nabla_x F_\D(\bar x_{+}, \bar y_{+}) \mid N = M\right],
\end{align*}
where the first equality follows from the law of total expectation, the second from the fact that $(\bar{x}_{+},\bar{y}_{+})$,$ (\bar{x}_{-},\bar{y}_{-})$, $(\hat x^1, \hat y^1)$ and $B$ are conditionally independent given $N$, the third from $\mathbb{E}\left[\nabla_x F_\D(\bar x_+, \bar y_+)\mid N = k\right] = \mathbb{E}\left[\nabla_x F_\D(\bar x_-, \bar y_-)\mid N = k+1\right]$ and the last one from $\mathbb{E}\left[\nabla_x F_\D(x_{-}, y_{-}) \mid N = 0\right] = \mathbb{E}\left[\nabla_x F_\D(\hat x^1, \hat y^1)\right]$. 
Finally, we get $\|\mathbb{E}[g_x] - \nabla_x F_\D(x,y)\|_{\infty} = \max_{j\in[d]} |\mathbb{E}[g_{x,j}] - \nabla_{x,j} F_\D(x,y)| \leq \frac{2L_2}{2^M}$ by Corollary \ref{cor:maurey_over_simplex}. Clearly, an analogous bound holds for the gradient with respect to $y$. Writing $\|\mathbb{E}[(g_x,g_y)] -\Phi_\D(x,y)\|_{\infty} = \max\{\|\mathbb{E}[g_x] - \nabla_x F_\D(x,y)\|_{\infty} , \|\mathbb{E}[g_y] - \nabla_y F_\D(x,y)\|_{\infty}\}$ gives the bias bound.

We now proceed with the second moment bound. Then, $\mathbb{E}\|g_x\|_{\infty}^2$ is equal to
\begin{align*}
&\textstyle \sum_{k=0}^M \mathbb{E}\|C_M2^k[\nabla_x f(\bar x_+, \bar y_+ ; B_k) - \nabla_x f(\bar x_-, \bar y_- ; B)] + \nabla_x f(\hat x^1, \hat y^1 ; B)\mid N = k\|_{\infty}^2\frac{2^{-k}}{C_M}\\
&\textstyle \leq 2\left[\sum_{k=0}^M \!C_M2^k\mathbb{E}\|\nabla_x f(\bar x_+, \bar y_+ ; B) \!-\! \nabla_x f(\bar x_-, \bar y_- ; B)\mid N = k\|_{\infty}^2 \!+\! \frac{2^{-k}}{C_M}L_0^2\right]\\
&\textstyle\leq 2L_0^2 + 4C_M\sum_{k=0}^M 2^k\mathbb{E}\|\nabla_x f(\bar x_+, \bar y_+ ; B) - \nabla_x f(x,y; B)\mid N = k \|_{\infty}^2\\
&\textstyle\quad+ 4C_M\sum_{k=0}^M\mathbb{E}2^k\|\nabla_x f(x,y; B) - \nabla_x f(\bar x_-, \bar y_- ; B)\mid N = k\|_{\infty}^2.
\end{align*}
By Corollary 
\ref{cor:grad_approx_second_mom_L2},  
$\mathbb{E}\|\nabla_x f(\bar x_+, \bar y_+ ; B) - \nabla_x f(x,y;B)\mid N = k\|_{\infty}^2 \lesssim \frac{L_2^2}{2^{2(k+1)}}+\frac{\log(d_x)L_1^2}{2^{k+1}}$,
and similarly 
$\mathbb{E}\|\nabla_x f(x,y;B) - \nabla_x f(\bar x_-, \bar y_- ; B) \mid N = k\|_{\infty}^2 \lesssim \frac{L_2^2}{2^{2k}}+\frac{\log(d_x)L_1^2}{2^k}$. 
Hence,  $\mathbb{E}\|g_x\|_{\infty}^2 \lesssim L_0^2 + L_2^2 + M\log(d_x)L_1^2$. Analogously,  $\mathbb{E}\|g_y\|_{\infty}^2 \lesssim L_0^2 + L_2^2 + M\log(d_y)L_1^2$. 
\end{proof}

\paragraph{Bias-reduced Algorithm} 
We propose Algorithm \ref{Alg:DPSSP_bias_reduction}, which is essentially Algorithm~\ref{Alg:DPSSP_no_bias_reduction} combined with the bias-reduced gradient oracle. The main challenges encountered are the oracles becoming heavy-tailed (moments are exponentially larger than their  expectation), and, due to a randomization of batch sizes, it is difficult to control the privacy budget. To handle the heavy-tailed nature of our estimators, we resort to efficiency estimates from Markov's inequality which only certify success with constant probability. For handling the privacy budget challenge, we use the Fully Adaptive Composition Theorem of DP \cite{Whitehouse:2023}, which allows us to set up the mechanisms and their privacy parameters in a random (but predictable) way. 

\begin{algorithm}[t!]
\caption{Private stochastic mirror descent with bias-reduced vertex sampling}\label{Alg:DPSSP_bias_reduction}
\begin{algorithmic}
\REQUIRE {\small Dataset $S$, Step-size $\tau$, Stopping parameter $U \ge 1$, TG parameter $M$, Batch parameter $\alpha$.} 
\STATE $x^1 = (1/d_x,...,1/d_x)$, $y^1 = (1/d_y,...,1/d_y), t = 1, N^1\sim \mbox{TG}(0.5, M)$ 
\WHILE{$\sum_{i \in [t-1]} 2^{N^i}\leq U - 2^M$}\label{line:stopping_condition}
    \STATE Let $B^t$ be a fresh batch of $\max\{1,2^{N^t}/\alpha\}$ samples. 
    \STATE $(g^t_x, g^t_y) = \operatorname{BiasReducedGradient}(x^t, y^t, N^t, B^t)$
    \STATE $x^{t+1}_j \propto x^t_j \exp\left(-\tau g^t_{x,j}\right), \quad\forall j\in [d_x]$ \quad and \quad $y^{t+1}_i \propto y^t_i \exp\left(\tau g^t_{y,i}\right), \quad\forall i\in [d_y]$
    \STATE $N^{t+1}\sim \mbox{TG}(0.5, M)$ \quad \text{and} set $t = t+1$
\ENDWHILE
\RETURN $(\tilde{x}, \tilde{y}) = \frac{1}{t}\sum_{i = 1}^{t} (\tilde{x}^{i}, \tilde{y}^{i})$ where $(\tilde{x}^i,\tilde{y}^i)_{i\in[t]} \stackrel{iid}{\sim} P_{x^i}\times P_{y^i}$\label{line:private_average_iterate}
\end{algorithmic}
\end{algorithm}

Next, we will prove Theorem \ref{thm:convergence_bias_reduction}. It will be useful to define the stopping time $\T = \sup \big\{ T\in \mathbb{N} : 
\sum_{t=1}^{T}2^{N_t}\leq U - 2^{M}\big\}.$
We note that Algorithm \ref{Alg:DPSSP_bias_reduction} performs $\T + 1$ optimization steps (see condition in Line \ref{line:stopping_condition}). The definition of $\T$ ensures that $\sum_{t=1}^{\T}2^{N_t} \leq U - 2^{M}$, and, since $2^{N_t} \leq 2^M$, we have $\sum_{t=1}^{\T+1}2^{N_t}\leq U$. 
We will first prove the privacy guarantees and then convergence. 

\subsubsection{Privacy Analysis of Algorithm \ref{Alg:DPSSP_bias_reduction}} Since this algorithm runs with random batch sizes and for a random number of steps, we use Fully Adaptive Composition (Theorem \ref{thm:fully_adaptive_composition}) to provide a sufficient condition on $U$ that ensures privacy. 

\begin{proposition}\label{prop:privacy_BR}
    Let $0 < \delta < 1$, $0 < \varepsilon < 8\log(1/\delta)$. If $U\leq \frac{\varepsilon^2}{48\log(1/\delta)(9\tau\alpha L_0)^2}$, then Algorithm \ref{Alg:DPSSP_bias_reduction} is $(\varepsilon, \delta)$-DP. 
\end{proposition}

\begin{proof}
By unraveling the recursion that defines $x^{t+1}$ iterate, it is easy to see that its $j$-th coordinate, $x^{t+1}_j$, is proportional to
\[\textstyle\exp\left\{-\tau \sum_{r=1}^t \Big[ 2^{N_r}\Big( \nabla_x f(\bar x_+^r, \bar y_+^r;B^r)-  \nabla_x f(\bar x_-^r, \bar y_-^r; B^r) \Big) + \nabla_x f(x_0^r, y_0^r ; B^r)\Big]\right\}.\]
Hence sampling a vertex from $\xsimplex$ where the $j$-th vertex is choosen with probability $x^t_j$ corresponds to an application of the exponential mechanism with score function given by $s^t_x = -\tau \sum_{r=1}^t \Big[ 2^{N_r}\Big( \nabla_x f(\bar x_+^r, \bar y_+^r;B^r)-  \nabla_x f(\bar x_-^r, \bar y_-^r; B^r) \Big) + \nabla_x f(x_0^r, y_0^r ; B^r)\Big].$ Next, note that in iteration $t$ of Algorithm \ref{Alg:DPSSP_bias_reduction}, we sample a vertex $\tilde x^t \sim P_{x^t}$ in Line \ref{line:private_average_iterate} and we also sample $2^{N_t+1}$ times from $P_{x^t}$ to construct the bias-reduced gradient (with respect to $x$) estimate in Line \ref{line:many_samplings} of Algorithm \ref{Alg:BiasReducedGradient}. Analogously, sampling vertices from $\ysimplex$ corresponds to the exponential mechanism with score function $s^t_y$, which is defined similarly to $s^t_x$ but using gradients w.r.t~ the $y$ variable.  
Denote by $\Delta(s_x^t)$ and $\Delta(s_y^t)$ the sensitivities of the score functions $s_x^t$ and $s_y^t$, respectively. Recall that exponential mechanism with an score function that has sensitivity $\Delta$ is $2\Delta$-DP. Hence, based on the fully adaptive composition of DP (Theorem \ref{thm:fully_adaptive_composition}), the algorithm is $(\varepsilon, \delta)$-DP if 
$\sqrt{8\log(1/\delta)S} + 2S \leq \varepsilon \text{ where } S = \sum_{t=1}^{\T+1}(2^{N_t+1}+1)(\Delta(s_x^t)^2 + \Delta(s_y^t)^2)$.
Next, we bound $\Delta(s_x^t)$. Fix a pair of neighboring datasets $S$ and $S'$. Denote by $s_x^t$ the score function with $S$ and by $s_x^{\prime t}$ with $S'$. When conditioning on the randomness up to iteration $t$ (recall the batch-size is a random quantity), either $B^r = B^{\prime r}$ for all $r \in [t]$, in which case $s_x^t = s_x^{\prime t}$, or there is exactly one index $r$ such that $B^r$ differ from $B^{\prime r}$ in exactly one datapoint (say $z^*$ and $z^{\prime *}$). In this case,
\begin{align*}
    &|s_x^t - s_x^{\prime t}| =\tau\Big| 2^{N_r}\Big( \nabla_x f(\bar x_+^r, \bar y_+^r;B^r)-  \nabla_x f(\bar x_-^r, \bar y_-^r; B^r) \Big) + \nabla_x f(x_0^r, y_0^r ; B^r)\\
    &\qquad-2^{N_r}\Big( \nabla_x f(\bar x_+^r, \bar y_+^r;B^{\prime r})-  \nabla_x f(\bar x_-^r, \bar y_-^r; B^{\prime r}) \Big) - \nabla_x f(x_0^r, y_0^r ; B^{\prime r})\Big|\\
    &\leq \tau\frac{2^{N_r}\alpha}{2^{N_r}}\Big|\nabla_x f(\bar x_+^r, \bar y_+^r;z^*) - \nabla_x f(\bar x_+^r, \bar y_+^r;z^{\prime *}) + \nabla_x f(\bar x_-^r, \bar y_-^r;z^*) - \nabla_x f(\bar x_-^r, \bar y_-^r;z^{\prime *})\Big|\\
    &\qquad + \frac{\tau\alpha}{2^{N_r}}\Big| \nabla_x f(x_0^r, y_0^r ; z^{*}) - \nabla_x f(x_0^r, y_0^r ; z^{\prime *})\Big| \leq \tau \alpha L_0 (4 + 1/2^{N_r}) \leq 4.5\tau \alpha L_0. 
\end{align*}
Analogously, $\Delta(s_y^t)\leq 4.5\tau \alpha L_0 $. Hence, every sampled vertex (either from $\xsimplex$ or $\ysimplex$), is $9\tau \alpha L_0$-DP. Plugging in the condition for privacy mentioned above, we need  $
\sqrt{2\log(1/\delta)\sum_{t=1}^{\T+1}(4\cdot2^{N_t}+2)(9\tau\alpha L_0)^2} + \frac{1}{2}\sum_{t=1}^{\T+1}(4\cdot2^{N_t}+2)(9\tau\alpha L_0)^2 \leq \varepsilon$.
We further restrict this stopping time by upper bounding each of the above terms by $
\varepsilon/2$. Further noticing that such a bound for the first term implies the same bound for the second term so long as $\varepsilon\leq 8\log(1/\delta)$, we conclude that it suffices
\[ 4\sum_{t=1}^{\T+1}2^{N_t}+2(\T+1) \leq 6\sum_{t=1}^{\T+1}2^{N_t} \leq \frac{\varepsilon^2}{8\log(1/\delta)(9\tau\alpha L_0)^2}. \]
This condition is satisfied when $U \leq \frac{\varepsilon^2}{48\log(1/\delta)(9\tau\alpha L_0)^2}$, since
\begin{align*}
    4\sum_{t=1}^{\T+1}2^{N_t}+2(\T+1) &\leq 6\sum_{t=1}^{\T+1}2^{N_t} \leq 6\sum_{t=1}^{\T}2^{N_t} + 6\cdot 2^M \leq 
    6(U - 2^M) + 6\cdot 2^M = 6U.
\end{align*} 
\end{proof}

\subsubsection{Convergence Analysis of Algorithm \ref{Alg:DPSSP_bias_reduction}} 

\begin{proof}[Proof of Theorem \ref{thm:convergence_bias_reduction}] 
For privacy we need $U\leq \frac{\varepsilon^2}{48\log(1/\delta)(9\tau\alpha L_0)^2}$ (Proposition \ref{prop:privacy_BR}). At the same time, we need $\sum_{t\in [\T+1]} \max\{1,2^{N_t}/\alpha\} \leq n$, to ensure Algorithm \ref{Alg:DPSSP_bias_reduction} does not exceed the sample size. For this condition to hold, it suffices that $\T + 1 \leq n/2$ and $\sum_{t\in[\T+1]} 2^{N_t} \leq n\alpha/2$. This way, we only need $\sum_{t\in[\T+1]} 2^{N_t} \leq \min\{n ,n\alpha/2\}$. Since $\sum_{t\in[\T+1]} 2^{N_t} \leq U$, then for this last condition to hold, it is sufficient that $ U\leq \min\{n ,n\alpha/2\}$. 
It is easy to see that under our parameter setting it holds that $U = \min\left\{\frac{\varepsilon^2}{48\log(1/\delta)(9\tau\alpha L_0)^2}, n\alpha/2, n/2\right\}$, so we don't exceed our privacy budget and we satisfy the sample size constraint. 

Now, let $(\tilde{x}, \tilde{y})$ be the output of the algorithm. We will prove that $\gap(\tilde{x}, \tilde{y}) \leq A/B$ with probability at least $0.99$, for some $A, B$ to be defined. In other words, we will prove that $\PP[\gap(\tilde{x}, \tilde{y}) \ge A/B] \leq 0.01$. Notice that 
\begin{equation}\label{eqn:probability_inequality}
    \PP\Big[\gap(\tilde{x}, \tilde{y}) \ge \frac{A}{B}\Big] 
    \leq \PP[(\T+1)\gap(\tilde{x}, \tilde{y}) \ge A] + \PP\Big[\frac{1}{(\T+1)} \ge \frac{1}{B}\Big].
\end{equation}
Note the second probability in  the right hand side can be bounded as follows:
\[\PP\Big[\frac{1}{\T+1} \ge \frac{1}{B}\Big] =  \PP[\T\leq B-1] \leq \PP\Big[{\textstyle\sum_{t\in[B]}}2^{N_t}> U - 2^M\Big] \leq \frac{B \mathbb{E}[2^{N_1}]}{U - 2^M},\]
where the first inequality holds since $\T\leq B - 1$ implies that $\sum_{t\in[B]} 2^{N_i} > U$, and the second follows from Markov's inequality. To bound the first probability on the right hand side of \eqref{eqn:probability_inequality}, we use Markov's inequality again
\[\PP\left[(\T+1)\gap(\tilde{x}, \tilde{y}) \ge A\right] \leq  \frac{\mathbb{E}\left[(\T+1)(\gap(\tilde{x}, \tilde{y}) - \gap(\bar{x},\bar{y})) + (\T+1)\gap(\bar{x},\bar{y})\right]}{A}.\]

Let us bound  $\mathbb{E}\left[(\T+1)\gap(\bar{x},\bar{y})\right]$. From Lemma \ref{lem:deterministic_gap_bound}, we have 
\[
    (\T+1)\gap(\bar{x}, \bar{y}) \lesssim \frac{\ell}{\tau} + \frac{\tau}{2} \sum_{t = 1}^{\T+1} (L_0^2 + \|g^t_x\|_{\infty}^2 + \|g^t_y\|_{\infty}^2) + \sum_{t = 1}^{\T+1} \langle \widetilde \Delta_t, (x^t, y^t) - (w^{t},v^{t})\rangle,
\]
where $\widetilde \Delta_t = \Phi_\D(x^t, y^t) - (g_x^t, g_y^t)$. Since $\T+1 \leq \sum_{t\in[\T+1]} 2^{N_t} \leq U$, then
\begin{align*}
    \textstyle\mathbb{E}\left[\sum_{t = 1}^{\T+1}   \|g^t_x\|_{\infty}^2\right] &=\textstyle \mathbb{E}\left[\sum_{t = 1}^U   \mathbb{E}[\|g^t_x\|_{\infty}^2\mathbbm{1}_{(\T \ge t - 1)} \mid \F_t]\right] = \mathbb{E}\left[\sum_{t = 1}^U   \mathbb{E}[\|g^t_x\|_{\infty}^2]\mathbbm{1}_{(\T \ge t-1)}\right]\\
    &\lesssim \textstyle\left(L_0^2 + L_2^2 + \log(d_x) ML_1^2\right)\mathbb{E}\left[\sum_{t = 1}^U   \mathbbm{1}_{(\T + 1 \ge t)}\right]\\
    &\lesssim \left(L_0^2 + L_2^2 + \log(d_x) ML_1^2\right) \mathbb{E}[\T+1],
\end{align*}
where $\F_t$ is the sigma algebra generated by all the randomness in the algorithm that generated $x^t$. In the second and third equalities, we used that $\{\T \ge t-1\} = \big\{\sum_{k \in [t-1]} 2^{N_k} \leq U - 2^M\big\}$ is $\F_t$-measurable (since $2^{N_1},...,2^{N_{t-1}}$ are used to generate $x^t$), and hence it can be taken out of the conditional expectation. Next, for the first inequality we used that $2^{N_t}$ is independent of $\F_t$, so we can use Lemma \ref{lem:BiasReducedGradient_properties} to bound the conditional expectation of the second moment. In the last line we used the fact that the expectation can be written as the sum of the tail probabilities. Analogously,  we can obtain $\mathbb{E}\big[\sum_{t = 1}^{\T+1}   \|g^t_y\|_{\infty}^2\big] \lesssim L_0^2 + L_2^2 + \log(d_y) ML_1^2 \mathbb{E}[\T+1]$. In a similar way, we can use the same technique of introducing the indicator of $\T + 1 \ge t$ and then Lemma \ref{lem:BiasReducedGradient_properties} to bound the bias term: 
$\mathbb{E}\left[\sum_{t = 1}^{\T+1} \langle \widetilde \Delta_t, (x^t, y^t) - (w^{t},v^{t})\rangle\right]\lesssim \frac{L_2\mathbb{E}[\T+1]}{2^M}$.
Now, we take expectations on both sides of the expression given by Lemma \ref{lem:deterministic_gap_bound} and then use the bounds we established above for the second moment and bias of the bias-reduced gradient estimators:
\begin{align*}
    \mathbb{E}[(\T+1)\gap(\bar{x}, \bar{y})] 
    \lesssim\frac{\ell}{\tau} + \tau[L_0^2 + L_2^2 + \ell  ML_1^2]\mathbb{E}[\T+1] + \frac{L_2\mathbb{E}[\T+1]}{2^M}.
\end{align*}
To bound $\mathbb{E}[\T+1]$ we note that $\sum_{t=1}^{\T + 1} 2^{N_t}\leq U$, so using Wald's identity \cite{Wald:1944} yields 
\begin{align*}
    U &\ge \mathbb{E}\left[\sum_{t=1}^{\T + 1} 2^{N_t}\right] = \mathbb{E}\left[\sum_{t=1}^{\T + 1} \mathbb{E}[2^{N_t}\mathbbm{1}_{(\T +1 \ge t)}| N_{t-1},...,N_1]\right] = \mathbb{E}\left[\sum_{t=1}^{\T + 1} \mathbb{E}[2^{N_t}]\mathbbm{1}_{(\T + 1\ge t)}\right]  \\
    &=\textstyle \mathbb{E}[2^{N_1}]\mathbb{E}\left[\sum_{t=1}^{\T + 1}\mathbbm{1}_{(\T + 1\ge t)}\right] = \mathbb{E}[2^{N_1}]\mathbb{E}\left[\T+1\right] \eqsim M \mathbb{E}\left[\T+1\right].
\end{align*}
Hence, $\mathbb{E}\left[\T+1\right] \lesssim U/\mathbb{E}[2^{N_1}] \eqsim U/M$. It follows that \vspace{-0.2cm}
\begin{multline*}
    \mathbb{E}[(\T+1)\gap(\bar{x}, \bar{y})] \lesssim \frac{\ell}{\tau} + \tau[L_0^2 + L_2^2 + \ell ML_1^2]\frac{U}{M} + \frac{L_2U}{M2^M}\\
    \lesssim \frac{\ell}{\tau} + \tau[L_0^2 + L_2^2 + \ell M L_1^2]U + \frac{L_2U}{2^M} \lesssim\sqrt{\ell[L_0^2 + L_2^2 + \ell M L_1^2]U}, 
\end{multline*}
where the last step follows from $M = \log_2(\sqrt{U})$ and $\tau =\sqrt{\frac{\ell}{(L_0^2+L_2^2+\ell M L_1^2)U}}$.

Now, let us bound $\mathbb{E}\left[(\T+1)(\gap(\tilde{x}, \tilde{y}) - \gap(\bar{x},\bar{y}))\right]$. We can assume that $\tilde{x}$'s and $\tilde{y}$'s are sampled after the While Loop is over in the Algorithm, thus by Lemma \ref{lem:nonprivate_2_private_gap}\vspace{-0.1cm}
\begin{multline*}
     \mathbb{E}\left[(\T+1)(\gap(\tilde{x}, \tilde{y}) - \gap(\bar{x},\bar{y}))\right] = \mathbb{E}\left[(\T+1)\mathbb{E}_{\tilde{x}, \tilde{y}}\left[\gap(\tilde{x}, \tilde{y}) - \gap(\bar{x},\bar{y})\mid \T\right]\right]\\
    \textstyle\leq \mathbb{E}\left[(\T+1)\left(\frac{4L_1}{\T+1} + \frac{2L_1\sqrt{\ell}}{\sqrt{\T+1}}\right)\right] = 4L_1 + 2L_1\sqrt{\ell}\mathbb{E}\left[\sqrt{\T+1}\right] \lesssim\sqrt{\ell L_1^2\frac{U}{M}},
\end{multline*}
where the second step follows by Jensen's inequality.
Combining the bounds obtained, we conclude that $\PP\left[(\T+1)\gap(\tilde{x}, \tilde{y}) \ge A\right] \lesssim \frac{\sqrt{\ell[L_0^2 + L_2^2 + (\ell) L_1^2]U}}{A}$. 

Plugging all the bounds in Equation \ref{eqn:probability_inequality}, $\PP\left[\gap(\tilde{x}, \tilde{y}) \ge \frac{A}{B}\right] \lesssim \frac{\sqrt{\ell[L_0^2 + L_2^2 + \ell M L_1^2]U}}{A}+\frac{B M}{U - \sqrt{U}}$.
Since $U \ge 4$, then $U - \sqrt{U} \ge U/2$. Choosing $A \eqsim \sqrt{\ell[L_0^2 + L_2^2 + \ell M L_1^2]U}$, $B \eqsim U/M$ and using $M\lesssim \log(n)$, gives the claimed bound. 
\end{proof}

\subsection{A high probability duality gap bound via Private Stochastic Mirror Descent with Bias-Reduced Sampling from Vertices}\label{sec:high_prob}
In this section, we will prove that Theorem \ref{thm:convergence_bias_reduction} can be turned into a high probability result. This is achieved with Algorithm \ref{alg:boosting}. The key idea is as follows. It is easy to see that by running Algorithm \ref{Alg:DPSSP_bias_reduction} multiple times on independent datasets of size of order $O(n/\log(1/\beta))$ (for any $\beta\in(0,1)$), we can ensure that with probability at least $1-\beta$, one of the output pairs has a duality gap of the same order as the gap obtained in Theorem \ref{thm:convergence_bias_reduction}, up to $\operatorname{polylog}(1/\beta)$ factors. The problem is how to find the pair with small duality gap among all the candidate pairs. The first idea would be to run a DP selection procedure, such as the exponential mechanism. This does not work directly, since the duality gap is not computable. Hence, in order to run the exponential mechanism, we need to create a score function that (1) resembles the duality gap and (2) is computable from samples and we can control its sensitivity. For each candidate pair, we propose a two step procedure to approximate the duality gap while satisfying the conditions (1) and (2). For a fixed pair $x,y$, we first use the fact that the duality gap is defined as a sum of maxima of concave functions, so we can use the Private Variance Reduced Frank-Wolfe from \cite{asi2021private} to find an approximate maximizer, say $(\tilde{x}, \tilde{y})$. The second step is to approximate the stochastic objective by samples. This way, we make two approximations: $\gap(x,y) \approx F_\D(\tilde{x},y) - F_\D(x,\tilde{y}) \approx F_{\tilde S}(\tilde{x},y) - F_{\tilde S}(x,\tilde{y})$, where $F_{\tilde S}(x,y) = \frac{1}{|{\tilde S}|} \sum_{z\in {\tilde S}} f(x,y;z)$ and ${\tilde S}$ is independent of $x,y,\tilde{x}, \tilde{y}$. The error in the first approximation can be controlled with the convergence guarantees of Private Variance Reduced Frank-Wolfe while the second can be controlled by Hoeffding's Inequality under the assumption that $f$ is bounded. Furthermore, $F_{\tilde S}(\tilde{x},y) - F_{\tilde S}(x,\tilde{y})$ is computable from samples. Now that we have approximated the duality gap by a computable quantity, we can run the exponential mechanism with this quantity as score function, which will select a candidate pair that approximately has the smallest duality gap among all the candidates. A detailed procedure can be found in Algorithm \ref{alg:boosting}, and our result is in Theorem \ref{thm:hp_result}. 

\begin{theorem}\label{thm:hp_result}
    Let $0 < \delta < 1$, $0 < \varepsilon < 8\log(1/\delta)$. Suppose we are solving the DPSSP problem in the $\ell_1$ setting where the objective $f(\cdot,\cdot;z): \Delta_x \times \Delta_y \to [-B,B]$ is convex-concave and second-order-smooth for all $z \in \Z$. Let $(x_{i^*}, y_{i^*})$ be the output of Algorithm \ref{alg:boosting}. Then, $(x_{i^*}, y_{i^*})$ is $(\varepsilon, \delta)$-DP and for any $\beta > 0$, with probability at least $1-\beta$, $\gap(x_{i^*}, y_{i^*})$ is bounded above (up to $\operatorname{polylog}(n, 1/\beta)$ and constant factors) by $\sqrt{\frac{\ell}{n}}  + \frac{\ell}{\sqrt{n\varepsilon}} + \frac{B}{\sqrt{n}} + \frac{B}{n\varepsilon}$.
\end{theorem}

\begin{algorithm}
\caption{Boosted private stochastic mirror descent with bias-reduced vertex sampling}
\label{alg:boosting}
\begin{algorithmic}[1]
\STATE \textbf{Input:} Dataset \( S \), parameters $I,J$,
, privacy parameters $\varepsilon, \delta$

\STATE Let $\A^{DPSSP}$ be Algorithm \ref{Alg:DPSSP_bias_reduction} and $\A^{DPSCO}$ be Algorithm 3 from \cite{asi2021private}.
\STATE Partition \( S \) into 4 equal parts \( S_1, S_2, S_3, S_4 \)

\STATE Partition \( S_1 \) into \( I \) equal parts: \( S_1^1, S_1^2, \dots, S_1^I \)
\STATE For each \( i \in [I] \), compute \( (x_i, y_i) = \mathcal{A}^{DPSSP}(S_1^i, \varepsilon, \delta) \)\label{step:candidate_pairs}

\STATE Partition \( S_2 \) and $S_3$ into \( IJ \) equal parts: \( S_2^{11}, S_2^{12}, \dots, S_2^{IJ}; S_3^{11}, S_3^{12}, \dots, S_3^{IJ} \)\label{step:candidate_maximizers}
\STATE For each \( i \in [I], j \in [J] \), compute \( x_{ij} = \mathcal{A}^{DPSCO}(S_2^{ij},\varepsilon, \delta, F_\D(\cdot, y_i)) \) 
\STATE For each \( i \in [I], j \in [J] \), compute \( y_{ij} = \mathcal{A}^{DPSCO}(S_3^{ij},\varepsilon, \delta, -F_\D(x_i, \cdot))) \)  

\STATE For each \( i \in [I] \), define \( G_i(S_4) = \max_{j \in [J]} F_{S_4}(x_i, y_{ij}) +  \max_{j \in [J]} -F_{S_4}(x_{ij}, y_{i})\)

\STATE Sample  
$i^*$ such that $\forall i \in [I]$, $\PP[i^* = i] \propto \exp\left(-n\varepsilon G_i(S_4)/[32B]\right)$\label{step:exp_mech_hpalgorithm}

\STATE \textbf{Return:} $(x_{i^*}, y_{i^*})$
\end{algorithmic}
\end{algorithm}

We first prove the privacy guarantee. 

\begin{proof}[Privacy of Algorithm \ref{alg:boosting}]
    $\A^{DPSSP}$ and $\A^{DPSCO}$ are $(\varepsilon, \delta)$-DP algorithms. Since we run them on disjoint parts of $S$, steps \ref{step:candidate_pairs}  and \ref{step:candidate_maximizers} are $(\varepsilon, \delta)$-DP by parallel composition of DP. The last step is an exponential mechanism with score function $G_i(S_4)$. Conditioned on $(x_i,y_i), x_{ij}, y_{ij}$, $i\in[I], j\in [J]$, the sensitivity of $G_i(S_4)$ is $4B/|S_4|$. Since $|S_4| = n/4$, we get that the sensitivity is $16B/n$. Theorem \ref{thm:exp_mech} gives that step \ref{step:exp_mech_hpalgorithm} is $(\varepsilon,0)$-DP. We conclude using parallel composition one more time that the whole algorithm is $(\varepsilon, \delta)$-DP. 
\end{proof}

To prove the accuracy guarantee we will need two results.

\begin{theorem}[Theorem 7 in \cite{asi2021private}]\label{thm:DPSCOell_1}
     Consider DPSCO in the $\ell_1$ setting as in Section \ref{sec:DPSCO}. Then, Private Variance Reduced Stochastic Frank-Wolfe (Algorithm 3 in \cite{asi2021private}) returns $\tilde{x}$ such that $\mathbb{E}[F_\D(\tilde{x}) - \min_{x\in \Delta_x} F_\D(x)] \lesssim C(d_x, n)$ where  
     \begin{equation}\label{eq:dpsco_error}
         C(d,n) := (L_0+L_1)\frac{\sqrt{\log(d)}\log(n)}{\sqrt{n}} + \left(\frac{\sqrt{L_1}L_0\log(1/\delta)\log(d)\log(n)^2}{n\varepsilon}\right)^{2/3}.
     \end{equation}
\end{theorem}

\begin{remark}
Combining the above with Markov's inequality gives that if we run Private Variance Reduced Frank-Wolfe $k$ times on datasets $S_1,...,S_k \overset{iid}{\sim} \D^n$, and get outputs $\tilde{x}_1,...,\tilde{x}_k$, then with probability at least $1-2^k$, $\min_{i \in [k]}\big(F_\D(\tilde{x}_i) - \min_{x\in \Delta_x} F_\D(x)\big) \lesssim 2C(d_x,n)$.
\end{remark}

\begin{theorem}[Theorem 3.11 in \cite{dwork2014DPfoundations}]\label{thm:accuracy_exp_mech}
   Consider the same setting as in Theorem \ref{thm:exp_mech}. Let $j^*$ be the object sampled from $J$ according to $\mathbb{P}(j = e_i) \propto \exp\left(\frac{\varepsilon s(S,i)}{2\Delta_s}\right)$. Then, for any $\alpha > 0$:
   $\PP\left[s(S,j^*) \leq \max_{j\in [J]} s(S,j) - \frac{2\Delta_s(\log(J) + \log(1/\alpha))}{\varepsilon}\right] \leq \alpha$.
\end{theorem}

\begin{proof}[Accuracy of Algorithm \ref{alg:boosting}] Given constants $C_1, C_2,C_3,C_4, C_5$, define the following events using the notation from the Algorithm \ref{alg:boosting}
\[A_{i}^x = \left\{\max_{x \in \Delta_{x}} - F_\D(x, y_i) \lesssim \max_{j \in [J]} -F_\D(x_{ij}, y_i) + C_1\right\},\] 
\[A_{i}^y =  \left\{\max_{y \in \Delta_{y}} F_\D(x_i, y) \lesssim \max_{j \in [J]} F_\D(x_{i}, y_{ij}) + C_2\right\},\]
\[A_{ij}^x = \left\{|F_\D(x_{ij}, y_i) - F_{S_4}(x_{ij}, y_i)| \lesssim C_3\right\}, A_{ij}^y = \left\{|F_\D(x_{ij}, y_i) - F_{S_4}(x_{ij}, y_i)| \lesssim C_3\right\},\]
\[A_{exp} = \left\{G_{i^*}(S_4) \lesssim \min_{i \in [I]}G_{i}(S_4) +   C_4\right\}, A_{BR} = \left\{\min_{i \in [I]} \gap(x_i,y_i) \lesssim C_5\right\}.\]
By Theorem \ref{thm:DPSCOell_1}, picking $C_1 = C(d_x, n/[4IJ]), C_2 = C(d_y, n/[4IJ])$, with $C(d,n)$ as defined in Equation \eqref{eq:dpsco_error}, gives $\PP[A_i^x] \ge 1 - 2^{-J}, \PP[A_i^y] \ge 1 - 2^{-J}$. Furthermore, since $|f(x,y;z)| \leq B$, Hoeffding's inequality gives $\PP[A_{ij}^x] \ge 1 - \alpha, \PP[A_{ij}^y] \ge 1-\alpha$ for $C_3 \eqsim B\sqrt{\frac{\log(1/\alpha)}{n}}$. By Theorem \ref{thm:accuracy_exp_mech}, if we pick $C_4 \eqsim \frac{B(\log(I) + \log(1/\gamma))}{n\varepsilon}$, then $\PP[A_{exp}] \ge 1-\gamma$. Finally, by Theorem \ref{thm:convergence_bias_reduction}, letting $C = L_0^2+L_2^2+\ell M L_1^2$ we have that for $C_5 \eqsim \log(n)\bigg [\sqrt{C}\sqrt{\frac{I\ell}{n}} + \sqrt{L_0}C^{1/4}\left(\frac{I\ell^{3/2}\sqrt{\log(1/\delta)}}{n\varepsilon}\right)^{1/2}\bigg]$, $\PP(A_{BR}) \ge 1-2^{-I}$. Let $E$ be the intersection of all the events above. Then, with the proposed choices for the constants $C_1,...,C_5$ it holds 
\[\PP[E] \ge 1 - (2^{-I} + \gamma + 2\alpha IJ + 2I2^{-J}),\]
and furthermore, choosing 
\[I = \log_2(4/\beta), \gamma = \beta/4, J = \log_2(8I/\beta), \alpha = \beta/(8IJ) \]
gives that $\PP[E] \ge 1-\beta$ for any $\beta \in (0,1)$. Finally, note that under $E$
\begin{align*}
    \gap(x_{i^*}, y_{i^*}) &= \max_{y \in \Delta_{y}} F_\D(x_{i^*}, y) + \max_{x \in \Delta_{x}}- F_\D(x, y_{i^*})\\
    &\leq \max_{j \in [J]} F_\D(x_{i^*}, y_{i^*j}) + \max_{j \in [J]}- F_\D(x_{i^*j}, y_{i^*}) + C_1 + C_2\\
    &\leq \max_{j \in [J]} F_{S_4}(x_{i^*}, y_{i^*j}) + \max_{j \in [J]}- F_{S_4}(x_{i^*j}, y_{i^*}) + C_1+ C_2 + 2C_3\\
    &= G_{i^*}(S_4) + C_1 + C_2 + 2C_3\\
    &\leq \min_{i \in [I]}G_{i}(S_4) + C_1 + C_2 + 2C_3 + C_4 \\
    &= \min_{i \in [I]}\left\{\max_{j \in [J]} F_{S_4}(x_i, y_{ij}) +  \max_{j \in [J]} -F_{S_4}(x_{ij}, y_{i})\right\} + C_1 + C_2 + 2C_3 + C_4 \\
    &\leq \min_{i \in [I]}\left\{\max_{j \in [J]} F_{\D}(x_i, y_{ij}) +  \max_{j \in [J]} -F_{\D}(x_{ij}, y_{i})\right\} + C_1 + C_2 + 4C_3 + C_4\\
    &\leq \min_{i \in [I]}\left\{\max_{y \in \Delta_{y}} F_{\D}(x_i, y) +  \max_{x \in \Delta_{x}} -F_{\D}(x, y_{i})\right\} + C_1 + C_2 + 4C_3 + C_4\\
    &= \min_{i\in[I]} \gap(x_i, y_i)+ C_1 + C_2 + 4C_3 + C_4\\
    &\leq C_1 + C_2 + 4C_3 + C_4 + C_5.
\end{align*}
Using the definition of $E$ and our choices for $C_1,C_2,C_3,C_4,C_5$, we conclude that with probability at least $1-\beta$, 
\begin{align*}
    &\gap(x_{i^*}, y_{i^*}) \lesssim \frac{(L_0+L_1)\sqrt{\ell\log_2(8\log_2(4/\beta)/\beta)}\log(n)}{\sqrt{n}} \\
    &+ \left(\frac{\sqrt{L_1}L_0\log(1/\delta)\ell\log(n)^2\log_2(8\log_2(4/\beta)/\beta)}{n\varepsilon}\right)^{2/3}\\
    &+ B\sqrt{\frac{\log([8\log_2(4/\beta)\log_2(8\log_2(4/\beta)/\beta))]/\beta)}{n}} + \frac{B(\log(\log_2(4/\beta)) + \log(4/\beta))}{n\varepsilon}\\
    &+ \log(n)\bigg [\sqrt{C}\sqrt{\frac{\log_2(4/\beta)\ell}{n}} + \sqrt{L_0}C^{1/4}\left(\frac{\log_2(4/\beta)\ell^{3/2}\sqrt{\log(1/\delta)}}{n\varepsilon}\right)^{1/2}\bigg].
\end{align*}
This bound is of order
$\sqrt{\frac{\ell}{n}}  + \frac{\ell}{\sqrt{n\varepsilon}} + \frac{B}{\sqrt{n}} + \frac{B}{n\varepsilon}$ up to $\operatorname{polylog}(n, 1/\beta)$ factors.
\end{proof}

\begin{algorithm}[t!]
\caption{Private stochastic mirror descent with vertex-sampling for DPSCO}\label{Alg:DPSCO}
\begin{algorithmic}[1]
\REQUIRE Dataset $S = \{z^1,...,z^n\}$, Total number of steps $T$, Step-size $\tau$, Sampling parameter $K$, Round length $q$, Initial point $x^1 = (1/d_x,...,1/d_x)$
\FOR{$t = 1$ to $T$}
    \STATE $w^t = \frac{(t-1)w^{t-1} + x^t}{t}$ 
    \IF{$t \leq q$ or $t = 0$ mod $q$} 
        \STATE $\hat{w}^t = \frac{1}{K}\sum_{k \in [K]}\hat{w}^{t,k}$, where $(\hat{w}^{t,i})_{i \in [K]}\overset{\text{iid}}{\sim} P_{w^t}$
    \ELSE
        \STATE $\hat{w}^t = \hat{w}^{t-1}$
    \ENDIF 
    \STATE $g^t = \nabla f(\hat{w}^{t}; B^t)$, where $B^t$ is a batch of $B = n/T$ fresh samples.
    \STATE $x^{t+1}_j \propto x^t_j \exp\left(-\tau g^t_j\right), \quad\forall j\in [d_x]$
\ENDFOR
\RETURN $\hat{w}^T = \frac{1}{K}\sum_{k \in [K]}\hat{w}^{T,k}$, where $(\hat{w}^{T,i})_{i \in [K]}\overset{\text{iid}}{\sim} P_{w^T}$
\end{algorithmic}
\end{algorithm}

\section{Differentially Private Stochastic Convex Optimization}\label{sec:DPSCO}

We show how our techniques can be used for DP-SCO in the $\ell_1$ setting. The setup is similar to DP-SSP in the $\ell_1$ setting: consider an unkown distribution $\D$ over a sample space $\Z$. We have access to $\D$ only through a sample $S = \{z^1,...,z^n\} \overset{\text{iid}}{\sim} \D$. Consider a loss function $f : \xsimplex \times \Z \to \RR$, where $\xsimplex$ is the standard simplex in $\RR^{d_x}$. We assume that for all $z \in \Z$, the function $f(\cdot, z)$ is $L_1$-smooth and $L_0$-Lipschitz w.r.t~$\|\cdot\|_1$ over $\xsimplex$. Let $F_\D(x) = \mathbb{E}_{z \sim \D}[f(x;z)]$ be the population loss. 
We are interested in solving $\min_{x\in \xsimplex} F_\D(x)$
with $(\varepsilon, \delta)$-DP algorithms. Many ideas used in the previous section for the SSP problem will be useful for here as well. We observe that guarantees for (entropic) stochastic mirror descent can be obtained by applying online-to-batch conversions on exponentiated gradient descent \cite{orabona_online_learning}. 
By contrast, we will next apply {\em anytime online-to-batch conversions} to exponentiated gradient descent \cite{anytime-online-to-batch}.  The version we provide below for this result is slightly different than \cite{anytime-online-to-batch} (see \cite{online_to_batch} for a proof).
\begin{lemma}[Anytime online-to-batch \cite{anytime-online-to-batch,online_to_batch}]\label{lem:anytimeOTB}
    Let $\X \subseteq \RR^{d_x}$ be a convex set. For any sequence $\beta_t \in \RR^+, g^t \in \RR^{d_x}$, suppose an online learner predicts $x^t$ and receives $t$-th loss $\ell_t(x) = \langle g^t, x\rangle$. Define $w^t = \sum_{i \in [t]}\frac{\beta_i x^i}{\sum_{j\in[t]} \beta_j}$ and $\operatorname{Regret}_T(x) = \sum_{t \in [T]} \langle g_t, x_t - x\rangle$. Then, for any convex, differentiable $F$ over $\X$ and any $x\in \X,$ 
    \[
    F(w^T) - F(x) \leq \frac{1}{\sum_{j\in[T]} \beta_j}\bigg (\operatorname{Regret}_T(x) + \sum_{t \in [T]} \langle \beta_t \nabla F(w^t) - g_t, x_t - x\rangle\bigg).
    \]
\end{lemma}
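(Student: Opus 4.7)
The plan is to prove the bound via a two-step application of convexity of $F$ coupled with a telescoping identity for the running weighted average $w^t$, followed by an algebraic introduction of the regret. Let me set $B_t = \sum_{j\in[t]}\beta_j$ with the convention $B_0 = 0$. The starting observation is the one-step recursion
\[
B_t\, w^t \;=\; B_{t-1}\, w^{t-1} + \beta_t\, x^t,
\]
which upon rearrangement gives the crucial identity $B_{t-1}(w^{t-1} - w^t) = \beta_t(w^t - x^t)$. This lets me convert displacements of the average into displacements $w^t - x^t$ that can eventually be matched against the online learner's prediction.

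Next I would apply convexity of $F$ in two different ways. First, at the pair $(w^{t-1}, w^t)$: $F(w^{t-1}) \geq F(w^t) + \langle \nabla F(w^t), w^{t-1}-w^t\rangle$. Multiplying by $B_{t-1}$, substituting the identity above, and collecting terms yields
\[
B_t F(w^t) - B_{t-1} F(w^{t-1}) \;\leq\; \beta_t F(w^t) \,-\, \beta_t\, \langle \nabla F(w^t),\, w^t - x^t\rangle.
\]
Telescoping from $t=1$ to $T$ (and using $B_0 = 0$) gives
\[
B_T F(w^T) \;\leq\; \sum_{t\in[T]} \beta_t F(w^t) \,-\, \sum_{t\in[T]} \beta_t\, \langle \nabla F(w^t),\, w^t - x^t\rangle.
\]
The second convexity application is at $(x, w^t)$: $F(w^t) \leq F(x) + \langle \nabla F(w^t), w^t - x\rangle$. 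Substituting into the previous display and cancelling the $\langle \nabla F(w^t), w^t\rangle$ terms produces
\[
B_T\bigl(F(w^T) - F(x)\bigr) \;\leq\; \sum_{t\in[T]} \beta_t\, \langle \nabla F(w^t),\, x^t - x\rangle.
\]

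Finally, I would introduce the online learner's losses by adding and subtracting $\sum_t \langle g^t, x^t - x\rangle$. Since $\ell_t(x) = \langle g^t, x\rangle$, this sum is exactly $\operatorname{Regret}_T(x)$. The residual is $\sum_t \langle \beta_t \nabla F(w^t) - g^t,\, x^t - x\rangle$, and dividing by $B_T$ recovers the claimed inequality. There is no real obstacle here: the only conceptual point is recognizing the telescoping structure given by the weighted-average recursion, which is what makes the scheme "anytime" (no horizon-dependent normalization is needed). Everything else is bookkeeping, and the proof is independent of the specific online algorithm or of the relationship between $g^t$ and $\nabla F(w^t)$.
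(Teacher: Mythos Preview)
Your proof is correct. The paper does not give its own proof of this lemma; it states the result and refers to \cite{anytime-online-to-batch} and \cite{online_to_batch}. Your argument is exactly the standard one: the weighted-average recursion $B_t w^t = B_{t-1} w^{t-1} + \beta_t x^t$ yields the key identity $B_{t-1}(w^{t-1}-w^t)=\beta_t(w^t-x^t)$, which, combined with two applications of convexity and a telescoping sum, gives $B_T(F(w^T)-F(x))\le \sum_t \beta_t\langle \nabla F(w^t), x^t-x\rangle$; inserting $\pm\langle g^t,x^t-x\rangle$ then produces the regret term plus the residual. Nothing is missing.
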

An advantage of this conversion is that the weighted averages $w^t$ change slowly from one iteration to the next, which is very useful in the smooth case \cite{online_to_batch}. Using Lemma~\ref{lem:anytimeOTB} with weights $\beta_j = 1$ to optimize over $\xsimplex$, the average iterates satisfy $\|w^t - w^{t-1}\|_1 = \|x^t - w^{t-1}\|_1/t\leq 2/t$. 
Consequently, $P_{w^{t-1}}$ and $P_{w^t}$ are progressively closer distributions.  
We exploit this idea in Algorithm~\ref{Alg:DPSCO} by maintaining estimates of the private iterates for a large number of steps, 
aggressively
reducing the number of sampled vertices and improving the privacy. 
Theorem \ref{thm:accuracy_anytime_method_L2smoothness} 
provides a faster rate  compared to the one we would get with the plain online-to-batch conversion. 

\begin{theorem}[Guarantees for Algorithm \ref{Alg:DPSCO}]\label{thm:accuracy_anytime_method_L2smoothness}
Let $0 < \delta < 1$, $0 < \varepsilon < 8\log(1/\delta)$. Let $f(\cdot;z)$ be convex and first-order-smooth. Then, there exists a parameter setting for $T,K,q,\tau$ such that Algorithm \ref{Alg:DPSCO} run with these as inputs returns $\hat{w}^T$ such that $\hat{w}^T$ is $(\varepsilon, \delta)$-DP and
\begin{multline*} \mathbb{E}[F_{\cal D}(\hat w^T)-F_{\cal D}(x)] \lesssim  (L_0 + L_1)\left[\sqrt{\frac{\log(d_x)}{n}} + L_0\frac{\log(d_x)^{7/10}\log(1/\delta)^{1/5}}{(n\varepsilon)^{2/5}}\right]\\
+ L_1\log(\sqrt{n}\log(d_x))\left[\frac{1}{\log(d_x)\sqrt{n}} + \frac{\log(1/\delta)^{1/5}}{\log(d_x)^{4/5}(n\varepsilon)^{2/5}}\right].
\end{multline*}

If $f(\cdot;z)$ is convex and second-order-smooth, Algorithm \ref{Alg:DPSCO} with a different choice for  $T,K,q,\tau$ outputs $\hat{w}^T$ that is $(\varepsilon, \delta)$-DP and satisfies
\begin{multline*} \mathbb{E}[F_{\cal D}(\hat{w}^T)-F_{\cal D}(x)] \lesssim (L_0+L_1)\left[\sqrt{\frac{\log(d_x)}{n}}+\frac{\log(d_x)\log(1/\delta)^{1/4}}{\sqrt{n\varepsilon}}\right]+\frac{L_2}{\sqrt{n}\log(d_x)^{1/4}}\\
+\frac{L_2\log(d_x)^{1/4}\log(1/\delta)^{1/4}}{\sqrt{n\varepsilon}} +L_1\log(\sqrt{n\log(d_x)})\Big[\frac{1}{\sqrt{n}}+\frac{\log(1/\delta)^{1/4}}{\sqrt{n\varepsilon}}\Big].
\end{multline*}
\end{theorem}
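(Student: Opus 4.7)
The plan is to combine the anytime online-to-batch conversion of Lemma~\ref{lem:anytimeOTB} applied to exponentiated gradient descent with the sparsification machinery from Section~\ref{sec:Maurey}, and to establish privacy by the exponential-mechanism argument analogous to Proposition~\ref{prop:privacy_PMWU_Maurey}. Since the output $\hat w^T$ is a $K$-sample sparsification of the anytime-averaged iterate $w^T$, Corollary~\ref{cor:maurey_over_simplex} gives $|\mathbb{E}[F_\D(\hat w^T) - F_\D(w^T)]| \lesssim L_1/K$, so it would suffice to bound $\mathbb{E}[F_\D(w^T) - F_\D(x)]$. Applying Lemma~\ref{lem:anytimeOTB} with $\beta_j = 1$ and linearized losses $\ell_t(x) = \langle g^t, x\rangle$ reduces this to the regret of exponentiated gradient, controlled by $\log(d_x)/\tau + \tau T L_0^2$, plus a gradient-error sum $(1/T)\sum_t \mathbb{E}\langle \nabla F_\D(w^t) - g^t, x^t - x\rangle$.

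For the gradient-error sum, I would decompose $\nabla F_\D(w^t) - g^t$ into a sparsification term $\nabla F_\D(w^t) - \nabla F_\D(\hat w^t)$ and a sampling term $\nabla F_\D(\hat w^t) - g^t$. The sampling term vanishes in expectation since the batch $B^t$ is fresh and independent of $\hat w^t$ and $x^t$. The sparsification term splits into two regimes. When $\hat w^t$ is freshly sampled at step $t$ (i.e., $t \leq q$ or $t \equiv 0 \bmod q$), $\hat w^t$ is conditionally independent of $x^t$ given $w^t$, so the inner product is at most $2\,\|\mathbb{E}[\nabla F_\D(\hat w^t)\mid w^t] - \nabla F_\D(w^t)\|_\infty$, which is $\lesssim L_2/K$ by Corollary~\ref{cor:gradient_bias_L2smooth} (second-order case) or $\lesssim L_1/\sqrt{K}$ by Corollary~\ref{cor:gradient_bias_L1smooth} (first-order case). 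When $\hat w^t$ is reused from an earlier step $t' \in [t-q+1, t)$, I would further insert $\nabla F_\D(w^{t'})$: the drift term is controlled by $L_1$-smoothness together with the anytime-averaging estimate $\|w^t - w^{t'}\|_1 \leq \sum_{s=t'+1}^{t} 2/s \lesssim q/t$, whose contribution sums to $\lesssim L_1 q \log(T/q)$; the stale sparsification term $\nabla F_\D(w^{t'}) - \nabla F_\D(\hat w^{t'})$ is bounded in expected $\ell_\infty$ norm via Corollary~\ref{cor:grad_approx_second_mom_L2} (or Corollary~\ref{cor:grad_approx_second_mom_noL2}).

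For privacy, sampling from $P_{w^t}$ can be realized by drawing a uniform $i \in [t]$ and then a vertex from $P_{x^i}$, so each vertex sample is a single exponential-mechanism call with sensitivity $\lesssim L_0 \tau T/n$. Counting $K$ samples at each of the $\lesssim q + T/q$ update steps, plus $K$ samples for the output, yields $\lesssim K(q + T/q)$ mechanism invocations, and by advanced composition the algorithm is $(\varepsilon,\delta)$-DP provided $\tau \lesssim \varepsilon n / [L_0 T \sqrt{K(q + T/q)\log(1/\delta)}]$. Finally, I would optimize $T,K,q,\tau$ under this constraint, taking $q \eqsim \sqrt{T}$ to minimize $q + T/q$ and tuning $K$ to balance the sparsification error against the regret; the distinct rates in the two smoothness regimes arise precisely from the $L_2/K$ versus $L_1/\sqrt{K}$ bias scaling, which permits a smaller $K$ (hence larger $\tau$ and more favorable privacy budget) in the second-order-smooth case. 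The hardest part will be the reused-sparsifier case, since $\hat w^t$ fails to be independent of $x^t$ (the intermediate gradients $g^{t'},\ldots,g^{t-1}$ shaping $x^t$ all use $\hat w^{t'} = \hat w^t$), forcing one to pay the expected $\ell_\infty$ norm of the sparsification error instead of only its bias; propagating this heavier contribution through the regret bound without inflating the final rate, while still absorbing everything into the $L_1\log(\sqrt{n\log d_x})$ slack terms in the theorem statement, is the delicate step.
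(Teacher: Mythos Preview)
Your overall architecture is right and your privacy argument, while phrased differently from the paper (which analyzes sampling from $P_{w^t}$ directly as an exponential mechanism with score function $\operatorname{lse}(-\tau G_j(S))$ and bounds its sensitivity via convexity of $\operatorname{lse}$), leads to the same constraint on $\tau$.

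The genuine gap is precisely the step you flag as hardest. Paying the full expected $\ell_\infty$ norm of the stale sparsification error does \emph{not} fit into the slack terms. In the second-order case, Corollary~\ref{cor:grad_approx_second_mom_L2} gives $\mathbb{E}\|\nabla F_\D(w^{t_0})-\nabla F_\D(\hat w^{t_0})\|_\infty \lesssim L_1\sqrt{\log(d_x)/K}$; with the paper's choice $K=\sqrt{T\log d_x}$ this contributes $L_1\log(d_x)^{1/4}/T^{1/4}$, i.e.\ $n^{-1/4}$ when $T\eqsim n$, whereas the theorem claims $n^{-1/2}$. No tuning of $T,K,q$ under the privacy constraint recovers the stated rate from this bound.

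The paper's fix is a \emph{fictitious iterate}: for $t>q$ with $t_0=q\lfloor t/q\rfloor$, define $\tilde x^t$ by replaying the multiplicative-weights updates from $x^{t_0}$ using the unsampled gradients $\nabla f(w^{t_0};B^k)$ in place of $\nabla f(\hat w^{t_0};B^k)$. Then split
\[
\langle \tilde\Delta^{t_0},\,x^t-x\rangle
= \underbrace{\langle \tilde\Delta^{t_0},\,\tilde x^t-x\rangle}_{A_1}
+ \underbrace{\langle \tilde\Delta^{t_0},\,x^t-\tilde x^t\rangle}_{A_2}.
\]
Since $\tilde x^t$ is independent of $\hat w^{t_0}$, the term $A_1$ gets the bias bound ($L_2/K$ or $L_1/\sqrt K$). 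For $A_2$, a direct coordinatewise comparison of the exponential-weights iterates shows, under the extra constraint $\tau\lesssim 1/(L_0 q)$, that
\[
\|x^t-\tilde x^t\|_1 \;\lesssim\; \tau\sum_{k=t_0}^{t-1}\|\nabla f(\hat w^{t_0};B^k)-\nabla f(w^{t_0};B^k)\|_\infty,
\]
so $A_2 \lesssim \tau q$ times a second-moment term. The crucial extra factor of $\tau$ (of order $\sqrt{\log(d_x)/T}$) is what brings this contribution down to match the leading $\sqrt{\log(d_x)/T}$ term. Your proposal is missing both this decoupling device and the additional step-size constraint $\tau\lesssim 1/(L_0 q)$ it requires.
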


\subsection{Privacy Analysis of Algorithm \ref{Alg:DPSCO}} Now we make explicit the advantage of using the anytime online-to-batch approach in terms of privacy. Namely, due to the substantial sampling reduction, our privacy budget degrades much more nicely.

\begin{proposition} \label{thm:privacy_anytime_method}
Let $0 < \delta < 1$, $0 < \varepsilon < 8\log(1/\delta)$. Then, Algorithm \ref{Alg:DPSCO} is $(\varepsilon,\delta)$-DP if $\tau\leq \frac{B\varepsilon}{8L_0\sqrt{2(TK/q+qK)\log(1/\delta)}}$.
\end{proposition}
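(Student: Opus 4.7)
The plan is to follow the template of the proof of Proposition \ref{prop:privacy_PMWU_Maurey}, with two modifications: (i) we only have one iterate sequence (in $x$) instead of two, and (ii) the sampling from vertices happens only at ``refresh'' iterations $t\leq q$ or $t\equiv 0\pmod q$, which is exactly where the savings come from. First, I would unroll the multiplicative update, $x^{t+1}_j\propto \exp\bigl(-\tau\sum_{i=1}^t g^i_j\bigr)$, so that $x^{t+1}$ is itself produced by an exponential mechanism with score $s^t(S,j)=-\tau\sum_{i=1}^t g^i_j(S)$. Since $w^t=\tfrac{1}{t}\sum_{i\leq t}x^i$, a draw from $P_{w^t}$ can be realized by choosing $i\in[t]$ uniformly at random (independent of $S$) and then drawing from $P_{x^i}$; this reduces every privacy-relevant sampling event in Algorithm \ref{Alg:DPSCO} to an exponential-mechanism call with a score of the form $-\tau\sum_{s<i}g^s_j$.

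Second, I would bound the sensitivity of each such score. Because the batches $B^1,\ldots,B^T$ are disjoint fresh samples of size $B=n/T$, replacing a single data point affects exactly one $g^s=\nabla f(\hat w^s;B^s)$, and $L_0$-Lipschitzness of $f$ w.r.t.\ $\|\cdot\|_1$ gives $\|g^s-g'^s\|_\infty\leq 2L_0/B$. Hence $\max_j|s^t(S,j)-s^t(S',j)|\leq 2L_0\tau/B$, and by Theorem \ref{thm:exp_mech} each sampled vertex is $(4L_0\tau/B)$-DP conditionally on the previously drawn vertices (the advanced composition step below handles the conditional structure).

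Third, I would count the number of exponential-mechanism invocations. Fresh $K$-vertex batches are drawn from $P_{w^t}$ only at the refresh iterations, of which there are at most $q+T/q$; together with the $K$ samples forming the output $\hat w^T$, this gives at most $N=qK+TK/q$ (up to constants) mechanism calls in total. Crucially, on non-refresh iterations the algorithm literally reuses $\hat w^{t-1}$, so no new $S$-dependent randomness is introduced and nothing is added to the composition count; this is the precise source of the improvement over a plain online-to-batch conversion.

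Finally, by Advanced Composition (Theorem \ref{thm:advanced_compostion}), the overall algorithm is $(\varepsilon,\delta)$-DP provided each mechanism is $\varepsilon_0$-DP with $\varepsilon_0\leq \varepsilon/[2\sqrt{2N\log(1/\delta)}]$. Substituting $\varepsilon_0=4L_0\tau/B$ and $N=qK+TK/q$ and solving for $\tau$ yields exactly the bound $\tau\leq B\varepsilon/[8L_0\sqrt{2(TK/q+qK)\log(1/\delta)}]$ claimed in the proposition. The only delicate point is step one: making sure that ``sampling from $P_{w^t}$'' is properly recast as an exponential mechanism (via the uniform-index trick) so that the sensitivity of the corresponding score is $2L_0\tau/B$ rather than something that grows with $t$; this is the one place where the anytime averaging structure interacts nontrivially with the privacy accounting.
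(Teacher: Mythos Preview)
Your approach is correct and genuinely different from the paper's. The paper casts a single draw from $P_{w^t}$ as \emph{one} exponential mechanism: it writes $w_j^t\propto\exp\bigl(\lse(-\tau G_j(S))\bigr)$ with $G_j(S)=\bigl(\sum_{k<i}g^k_j\bigr)_{i\in[t]}$, then bounds the sensitivity of the log-sum-exp score via $\|\nabla\lse\|_1=1$, obtaining $2L_0\tau/B$ and hence $4L_0\tau/B$-DP per draw. You instead realize the draw as a data-independent uniform mixture over $i\in[t]$ of the exponential mechanisms for $x^i$, each already $4L_0\tau/B$-DP, and use that such a mixture inherits the guarantee. Both routes reach the same per-draw budget and the same composition count $TK/q+qK$ (up to a $+K$ for the output), so the final bound matches. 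Your argument is more elementary and sidesteps a delicate point: the per-iterate normalizers $Z_i=\sum_{j'}\exp(-\tau\sum_{k<i}g^k_{j'})$ vary with $i$, so $w^t_j$ is not literally proportional to $\sum_i\exp(-\tau\sum_{k<i}g^k_j)$; the paper's conclusion remains correct, but the intermediate $\lse$ identity needs more care than is given. One wording tweak for your write-up: the mixture is not itself a single exponential mechanism---what you actually use is that a data-independent mixture of $\varepsilon_0$-DP mechanisms is $\varepsilon_0$-DP, which is exactly the conditional property advanced composition requires.
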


\begin{proof}
    Note that $w^t = \frac{1}{t} \sum_{i\in[t]} x^i$. Hence, for any $j\in [d_x]$
    \[ w^t_j = \frac{1}{t} \sum_{i\in[t]} x^i_j\propto \sum_{i\in[t]} \exp\left(-\tau \sum_{k \in [i-1]} g^k_j\right) = \exp\left(\log\left(\sum_{i\in[t]} \exp\left(-\tau \sum_{k \in [i]} g^k_j\right)\right)\right).\]
    Denote by $G_j(S) = \big(\sum_{k \in [i-1]} g^k_j\big)_{i\in[t]}$ the vector of the cumulative sum of the gradients that our algorithm produces when it has been fed with dataset $S$. Also, define the log-sum-exp function as $\lse((x_1,...,x_k)) = \log(\sum_{i=1}^k\exp\{x_i\})$. With this new notation, $w^t_j \propto \exp(\lse(-\tau G_j(S)))$. Two well-known properties of the $\lse(x)$ function are that it is convex in its argument and its gradient is the softmax function of $x$. As a consequence of the second property mentioned, it is clear that $\|\nabla \lse(x)\|_1 = 1$ for every $x$. By convexity of $\lse$, we have 
    \begin{align*}
        \lse(-\tau G_j(S))& - \lse(-\tau G_j(S')) \leq \langle \nabla \lse(-\tau G_j(S)), \tau G_j(S') - \tau G_j(S) \rangle\\
        &\leq \tau \|\nabla \lse(-\tau G(S))\|_1\|G_j(S) - G_j(S')\|_{\infty} = \tau \|G_j(S) - G_j(S')\|_{\infty}.
    \end{align*}
    Furthermore,
    $\max_{j\in[d_x]} \tau \|G_j(S) - G_j(S')\|_{\infty} = \tau \max_{j\in[d_x], i \in [t]}  \left|\sum_{k \in [i-1]} (g^k_j - g^{\prime k}_j)\right| \leq \frac{2\tau L_0}{B}$ by $L_0$-Lipschitzness of $f$. Hence sampling vertices from $\xsimplex$ according to the probabilities induced by $w^t$ is equivalent to running the exponential mechanism with score function $s(S,j) = \lse(-\tau G_j(S))$, whose sensitivity is bounded by $\frac{2\tau L_0}{B}$. Since we sample $K$ times during the first $q$ iterations, then every $q$ iterations and then $K$ times to sparsify the last iterate, the overall number of samplings will be $TK/q + qK + K$. Since each of them is $\frac{4\tau L_0}{B}$-DP, then by Advanced Composition of DP (Corollary \ref{cor:pureDPadvanced_compostion}) we only need that 
    $\frac{4\tau L_0}{B} \leq \frac{\varepsilon}{2\sqrt{2(TK/q + (q+1)K)\log(1/\delta)}}$.
\end{proof}

\subsection{Convergence Analysis of Algorithm \ref{Alg:DPSCO}}

\begin{proof}[Proof of Theorem \ref{thm:accuracy_anytime_method_L2smoothness}] Let $x\in\xsimplex$ be a minimizer of $F_\D(\cdot)$. From Lemma \ref{lem:anytimeOTB}, 
    $F_\D(w^T) - F_\D(x) \leq \frac{1}{T} \big[\sum_{t \in [T]} \langle g_t, x_t - x\rangle + \sum_{t \in [T]} \langle \nabla F_\D(w^t) - g^t, x^t-x\rangle\big]$.
    The standard analysis of mirror descent yields, $\sum_{t \in [T]} \langle g_t, x_t - x\rangle \leq \frac{\log(d_x)}{\tau} + \frac{\tau}{2} \sum_{t \in [T]} \|g^t\|_{\infty}^2$ \cite{JNLS:2009}.  
    Using that $\|g^t\|_{\infty} \leq L_0$ and taking expectations, we obtain 
    \[\mathbb{E}[F_\D(w^T) - F_\D(x)] \leq \frac{\log(d_x)}{\tau T} + \frac{\tau L_0^2}{2} + \frac{1}{T}\sum_{t \in [T]} \mathbb{E}\left[\langle \nabla F_\D(w^t) - g^t, x^t-x\rangle\right].\]
    Let us bound  $\mathbb{E}\left[\langle \nabla F_\D(w^t) - g^t, x^t-x\rangle\right]$. For $t\leq q$, we have $g^t \! = \! \nabla f(\hat{w}^t;B^t)$, and 
    \begin{align*}
        &\mathbb{E}[\langle \nabla F_\D(w^t) - \nabla f(\hat{w}^{t} ; B^t), x^t-x \rangle] = \mathbb{E}[\langle \nabla f(w^{t} ; B^t) - \nabla f(\hat{w}^{t} ; B^t), x^t-x\rangle]\\
        &=\mathbb{E}_{x^t}[\langle \mathbb{E}_{\hat{w}^t}[\nabla f(w^{t} ; B^t) - \nabla f(\hat{w}^{t} ; B^t) \mid x^t], x^t-x\rangle]\\
        &\leq 2\|\mathbb{E}_{\hat{w}^{t}}[\nabla f(w^{t} ; B^t) - \nabla f(\hat{w}^{t} ; B^t)]\|_{\infty} \leq \begin{cases}\frac{8L_1}{\sqrt{K}}& \text{for first-order-smooth $f(\cdot;z)$} \\ \frac{4L_2}{K}& \text{for second-order-smooth $f(\cdot;z)$.}\end{cases}
    \end{align*}
    where in the first equality we used that $\nabla f(w^{t} ; B^t)$ is an unbiased estimator of $\nabla F_\D(w^{t})$ and for the second inequality we used Corollaries \ref{cor:gradient_bias_L1smooth} and  \ref{cor:gradient_bias_L2smooth}. 
    Now consider the case $t > q$. First, denote by $t_0 = q \lfloor t/q\rfloor$. That is, $t_0$ was the last time that  $\hat{w}$ was updated by the algorithm before the $t$-th iteration. Hence, $g^t = \nabla f(\hat{w}^{t_0} ; B^t)$. Now, we decompose the error into three terms and bound each of them individually: 
    \begin{align*}
        \langle \nabla F_\D(w^t) - \nabla f(\hat{w}^{t_0} ; B^t), x^t-x\rangle &= \langle \nabla F_\D(w^t) - \nabla f(w^{t} ; B^t), x^t-x\rangle\\ 
        &+ \langle\nabla f(w^{t} ; B^t) - \nabla f(w^{t_0} ; B^t), x^t-x\rangle\\
        &+ \langle \nabla f(w^{t_0} ; B^t) - \nabla f(\hat{w}^{t_0} ; B^t), x^t-x\rangle.
    \end{align*}
    Clearly $\mathbb{E}[\langle \nabla F_\D(w^t) - \nabla f(w^{t} ; B^t), x^t-x\rangle] = 0$, since $\nabla f(w^{t} ; B^t)$ is an unbiased estimator of $\nabla F_\D(w^t)$ conditionally on the randomness up to iteration $t$. For the second term, note that
    \begin{align*}
        &\langle\nabla f(w^{t} ; B^t) - \nabla f(w^{t_0} ; B^t), x^t-x\rangle \leq \|\nabla f(w^{t} ; B^t) - \nabla f(w^{t_0} ; B^t)\|_{\infty}\|x^t-x\|_1\\
        &\leq 2L_1\|w^{t} - w^{t_0}\|_1 \leq  2L_1 \sum_{k = t_0}^{t-1} \|w^{k+1} - w^{k}\|_1 \leq 2L_1 \sum_{k = t_0}^{t-1} \frac{2}{k+1} \leq \frac{4L_1q}{q\lfloor t/q\rfloor + 1},
    \end{align*}
    where the first inequality follows from H\"older, the second from the $L_1$-smoothness of $f$, the third from the triangle inequality for $\|\cdot\|_1$, and the fourth from the stability of the average iterates. Now, we decompose
    \[ \langle \nabla f(w^{t_0} ; B^t) - \nabla f(\hat{w}^{t_0} ; B^t), x^t-x\rangle = A_1+A_2,\]
    with $A_1:=\langle \widetilde{\Delta}^{t_0}, \Tilde{x}^t-x\rangle$ and $A_2:=\langle \widetilde{\Delta}^{t_0}, x^t-\Tilde{x}^t\rangle$, 
    where $\widetilde{\Delta}^{t_0}$ denotes $\nabla f(w^{t_0} ; B^t) - \nabla f(\hat{w}^{t_0} ; B^t)$ and $\tilde{x}^t$ is a ficticious iterate obtained by starting at $x^{t_0}$ and performing updates $\tilde x_j^{t+1}\propto \tilde x_j^t \exp\big(-\tau\nabla_j f(w^{t_0};B^k)\big)$. 
    We start by bounding $A_1$:
    \begin{align*}
        &\mathbb{E}_{\hat{w}^{t_0}}[\langle \widetilde{\Delta}^{t_0}, \Tilde{x}^t-x\rangle] =\langle \mathbb{E}_{\hat{w}^{t_0}}[\nabla f(w^{t_0} ; B^t) - \nabla f(\hat{w}^{t_0} ; B^t)], \Tilde{x}^t-x\rangle\\
        &\leq 2\|\mathbb{E}_{\hat{w}^{t_0}}[\nabla f(w^{t_0} ; B^t) - \nabla f(\hat{w}^{t_0} ; B^t)]\|_{\infty} \leq \begin{cases}\frac{8L_1}{\sqrt{K}}& \text{for first-order-smooth $f(\cdot;z)$} \\ \frac{4L_2}{K}& \text{for second-order-smooth $f(\cdot;z)$,}\end{cases}
    \end{align*}
    where the equality holds since $\tilde{x}^t$, as opposed to $x^t$, does not depend on $\hat{w}^{t_0}$, and the last inequality follows from Corollaries \ref{cor:gradient_bias_L1smooth} and  \ref{cor:gradient_bias_L2smooth}. 
    We bound now $A_2$. Note that $\langle \widetilde{\Delta}^{t_0}, x^t-\Tilde{x}^t\rangle \leq \|\widetilde{\Delta}^{t_0}\|_{\infty} \|x^t - \tilde{x}^t\|_1$.  
    Denote $\hat{G}_j = -\tau\sum_{k = t_0}^{t-1}\nabla_j f(\hat{w}^{t_0};B^k), G_j = -\tau\sum_{k = t_0}^{t-1}\nabla_j f(w^{t_0};B^k)$. We 
    bound $|(x^t - \tilde{x}^t)_j|$ as follows
    \begin{align*}
        (x^t - \tilde{x}^t)_j &=\frac{x^{t_0}_j\exp(\hat{G}_j)}{\sum_{i \in [d_x]}x^{t_0}_i\exp(\hat{G}_i)} - \frac{x^{t_0}_j\exp\left(G_j\right)}{\sum_{i \in [d_x]}x^{t_0}_i\exp\left(G_i\right)}\\
        &= \frac{x^{t_0}_j\exp\left(G_j\right)\exp(\hat{G}_j - G_j)}{\sum_{i \in [d_x]}x^{t_0}_i\exp\left(G_i\right)\exp(\hat{G}_i - G_i)} - \frac{x^{t_0}_j\exp\left(G_j\right)}{\sum_{i \in [d_x]}x^{t_0}_i\exp\left(G_i\right)}\\
        &\leq \frac{x^{t_0}_j\exp\left(G_j\right)}{\sum_{i \in [d_x]}x^{t_0}_i\exp(G_i)}\Big[\exp\big(2\max_{j\in[d_x]} |\hat{G}_j - G_j|\big) - 1\Big],
    \end{align*}
    and similarly $(\tilde{x}^t - x^t)_j \leq \frac{x^{t_0}_j\exp\left(G_j\right)}{\sum_{i \in [d_x]}x^{t_0}_i\exp\left(G_i\right)}\left[2\max_{j\in[d_x]} |\hat{G}_j - G_j|\right]$.
    Combining both inequalities, we obtain $|(x^t - \tilde{x}^t)_j| \leq \frac{x^{t_0}_j\exp\left(G_j\right)}{\sum_{i \in [d_x]}x^{t_0}_i\exp\left(G_i\right)} \psi$, where\\ $\psi := \max\Big\{2\max_{j\in[d_x]} |\hat{G}_j - G_j|, \exp\big(2\max_{j\in[d_x]} |\hat{G}_j - G_j|\big) - 1\Big\}$.
    It follows that $\|x^t - \tilde{x}^t\|_1 = \sum_{j\in [d_x]} |(x^t - \tilde{x}^t)_j| \leq \sum_{j\in[d_x]} \frac{x^{t_0}_j\exp\left(G_j\right)}{\sum_{i \in [d_x]}x^{t_0}_i\exp\left(G_i\right)}\psi = \psi$.
    Next, we use that $e^x - 1 \leq 2x$ for $x\leq 1$. Assume for now $2\max_{j\in[d_x]} |\hat{G}_j - G_j| \leq 1$, so $\psi = 4\max_{j\in[d_x]} |\hat{G}_j - G_j|$. Using the definition of $\hat{G}_j, G_j$, it follows that
    \begin{align*}
        \|x^t - \tilde{x}^t\|_1 \leq \psi &= \textstyle 4\max_{j\in[d_x]} \tau \left|\sum_{k = t_0}^{t-1}[\nabla_j f(\hat{w}^{t_0};B^k) - \nabla_j f(w^{t_0};B^k)]\right|\\
        &\textstyle\leq 4\tau \max_{j\in[d_x]} \sum_{k = t_0}^{t-1}\left|\nabla_j f(\hat{w}^{t_0};B^k) - \nabla_j f(w^{t_0};B^k)\right| \\
        &\textstyle\leq 4\tau \sum_{k = t_0}^{t-1}\left\|\nabla f(\hat{w}^{t_0};B^k) - \nabla f(w^{t_0};B^k)\right\|_{\infty}.
    \end{align*}
    This has two implications. The first is that $\psi \leq 8\tau q L_0$, by $L_0$-Lipschitzness of $f$. This further implies that in order to ensure that  $2\max_{j\in[d_x]} |\hat{G}_j - G_j| \leq 1$ holds, it suffices that $8\tau q L_0\leq 2$, or equivalently $\tau \leq 1/(4L_0q)$. We impose this condition on $\tau$ later when optimizing the convergence rate. The second is the following bound on $A_2$
    \begin{align*}
        &A_2=
        \langle \widetilde{\Delta}^{t_0}, x^t-\Tilde{x}^t\rangle  = \langle \nabla f(w^{t_0} ; B^t) - \nabla f(\hat{w}^{t_0} ; B^t), x^t-\Tilde{x}^t\rangle\\
        &\leq \|\nabla f(w^{t_0} ; B^t) - \nabla f(\hat{w}^{t_0} ; B^t)\|_{\infty} \|x^t - \tilde{x}^t\|_1\\ 
        &\textstyle\leq 4\tau \sum_{k = t_0}^{t-1}\|\nabla f(w^{t_0} ; B^t) - \nabla f(\hat{w}^{t_0} ; B^t)\|_{\infty}\|\nabla f(\hat{w}^{t_0};B^k) - \nabla f(w^{t_0};B^k)\|_{\infty}\\
        &\textstyle\leq 2\tau \sum_{k = t_0}^{t-1}\Big(\|\nabla f(w^{t_0} ; B^t) - \nabla f(\hat{w}^{t_0} ; B^t)\|_{\infty}^2 + \|\nabla f(\hat{w}^{t_0};B^k) - \nabla f(w^{t_0};B^k)\|_{\infty}^2\Big)\\
        &\textstyle\leq 2\tau q\|\nabla f(w^{t_0} ; B^t) - \nabla f(\hat{w}^{t_0} ; B^t)\|_{\infty}^2 + 2\tau\sum_{k = t_0}^{t-1}\|\nabla f(\hat{w}^{t_0};B^k) - \nabla f(w^{t_0};B^k)\|_{\infty}^2\\
        &\lesssim \begin{cases}q\tau\big[\frac{L_1^2}{\sqrt{\log(d_x)}K^{3/2}} + \frac{(L_0^2 + L_1^2)\sqrt{\log(d_x)}}{\sqrt{K}}\big]& \text{for first-order-smooth $f(\cdot;z)$} \\ q\tau \big[\frac{L_2^2}{K^2} + \frac{L_1^2\log(d_x)}{K}\big]& \text{for second-order-smooth $f(\cdot;z)$,}\end{cases}
    \end{align*}
    where we bounded the second moment of the estimation errors with the results in Corollaries \ref{cor:grad_approx_second_mom_L2} and \ref{cor:grad_approx_second_mom_noL2}.
    We conclude that for all $t>q$
    \begin{align*}
        &\mathbb{E}[\langle \nabla F_{\D}(w^t) - \nabla f(\hat{w}^{t_0} ; B^t), x^t-x\rangle] \\
        &= \mathbb{E}\big[\langle \nabla F_{\D}(w^t) - \nabla f(w^{t} ; B^t), x^t-x\rangle\big]
        + \mathbb{E}\big[\langle\nabla f(w^{t} ; B^t) - \nabla f(w^{t_0} ; B^t), x^t-x\rangle\big]\\
        &\quad+ \mathbb{E}\big[\langle \nabla f(w^{t_0} ; B^t) - \nabla f(\hat{w}^{t_0} ; B^t), x^t-x\rangle\big]\\
        &\leq 0 + \frac{4L_1q}{q\lfloor t/q\rfloor + 1} + 2\|\mathbb{E}_{\hat{w}^{t_0}}[\nabla f(w^{t_0} ; B^t) - \nabla f(\hat{w}^{t_0} ; B^t)]\|_{\infty} 
        \\
        & \, + 2\tau q\|\nabla f(w^{t_0} ; B^t) - \nabla f(\hat{w}^{t_0} ; B^t)\|_{\infty}^2 + 2\tau\sum_{k = t_0}^{t-1}\mathbb{E}\left[\|\nabla f(\hat{w}^{t_0};B^k) - \nabla f(w^{t_0};B^k)\|_{\infty}^2\right]\\
        &\lesssim \begin{cases} \frac{L_1}{\sqrt{K}} + q\tau\big[\frac{L_1^2}{\sqrt{\log(d_x)}K^{3/2}} + \frac{(L_0^2 + L_1^2)\sqrt{\log(d_x)}}{\sqrt{K}}\big]& \text{for first-order-smooth $f(\cdot;z)$} \\ \frac{L_2}{K} + q\tau \big[\frac{L_2^2}{K^2} + \frac{L_1^2\log(d_x)}{K}\big]& \text{for second-order-smooth $f(\cdot;z)$.}\end{cases}
    \end{align*} 
    And recall that for $t\leq q$, 
    \[\mathbb{E}[\langle \nabla F_\D(w^t) - \nabla f(\hat{w}^{t} ; B^t), x^t-x\rangle]\leq \begin{cases}\frac{8L_1}{\sqrt{K}}& \text{for first-order-smooth $f(\cdot;z)$} \\ \frac{4L_2}{K}& \text{for second-order-smooth $f(\cdot;z)$.}\end{cases}\]
    
    Finally, we plug this bounds into our excess risk bound. We start considering the case of second-order-smooth functions. We bound $\mathbb{E}[F_\D(w^T) - F_\D(x)] $ by 
    \begin{align}
        &\frac{\log(d_x)}{\tau T} + \frac{\tau L_0^2}{2} + \frac{1}{T}\sum_{t \in [T]} \mathbb{E}\left[\langle \nabla F_\D(w^t) - g^t, x^t-x\rangle\right] \notag \\
        &\textstyle\lesssim \frac{\log(d_x)}{\tau T} + \tau L_0^2 + \frac{1}{T}\sum_{t \in [T]} \Big(\frac{L_2}{K} + \mathbbm{1}_{(t > q)}\Big[\frac{4L_1q}{q\lfloor t/q\rfloor + 1} + q\frac{L_2^2\tau}{K^2} + q\frac{L_1^2\log(d_x)\tau}{K}\Big]\Big) \notag\\
        &\textstyle\lesssim \frac{\log(d_x)}{\tau T} + \tau\Big[ L_0^2 + \frac{L_1^2q\log(d_x)}{K} + \frac{L_2^2q}{K^2} \big]  + \frac{L_2}{K} + \frac{1}{T}\sum_{t = q+1}^T \frac{L_1q}{q\lfloor t/q\rfloor + 1}\notag\\
        &\lesssim \frac{\log(d_x)}{\tau T} + \tau\Big[ L_0^2 + \frac{L_1^2q\log(d_x)}{K} + \frac{L_2^2q}{K^2} \big]  + \frac{L_2}{K} + \frac{L_1 q \log(T/q)}{T},\label{eqn:final_UB_anytime}
    \end{align}
    where the last line follows from $\sum_{t = q+1}^T \left[\frac{1}{q\lfloor t/q\rfloor + 1}\right] \leq q\left(\frac{1}{q} + \frac{1}{2q} + ... + \frac{1}{(T/q)q}\right) \lesssim \log\big(\frac{T}{q}\big)$.
    Now, by Theorem \ref{thm:privacy_anytime_method}, and recalling that the batch size is $B=n/T$, we have a constraint on $\tau>0$ in order to satisfy privacy. We also have the constraint of $\psi \leq 1$, which holds true only when $\tau \leq 1/(8qL_0)$. Optimizing \eqref{eqn:final_UB_anytime} under these constraints, we have that 
    we should choose $\tau$ of order
    \begin{align*}
    \min\bigg\{ &\sqrt{\frac{\log(d_x)}{(L_0^2 + L_1^2q\log(d_x)/K + L_2^2q/K^2)T}},  \frac{1}{L_0q}, \frac{n\varepsilon}{TL_0\sqrt{(TK/q + qK)\log(1/\delta)}}\bigg \}.
    \end{align*} 
    With this choice we obtain 
    \begin{align*}
        \mathbb{E}[F_\D(w^T) - F_\D(x)] &\lesssim \sqrt{\frac{(L_0^2 + L_1^2 q\log(d_x)/K+ L_2^2q/K^2)\log(d_x)}{T}} + \frac{L_0q\log(d_x)}{T}\\
        &\qquad+\frac{L_0\sqrt{(TK/q + qK)\log(1/\delta)}\log(d_x)}{n\varepsilon}  +\frac{L_2}{K} + \frac{L_1q\log(T/q)}{T}.
    \end{align*}
    Note that the sparsification error is, by Corollary \ref{cor:maurey_over_simplex}, $\mathbb{E}[F_\D(\hat{w}^T) - F_\D(w^T)] \lesssim \frac{L_1}{K}$. Combining this with the previous inequality and setting $ q = \sqrt{T/\log(d_x)}, K = T/q = \sqrt{T\log(d_x)}$, leads to 
    \begin{multline*}
        \textstyle\mathbb{E}[F_\D(\hat{w}^T) - F_\D(x)] \lesssim (L_0+L_1)\sqrt{\frac{\log(d_x)}{T}} \\
        \textstyle + \frac{L_2}{T^{3/4}\log(d_x)^{1/4}} +\frac{L_1 + L_2}{\sqrt{T\log(d_x)}} 
        + \frac{L_1\log(\sqrt{n\log(d_x)})}{\sqrt{T\log(d_x)}}+\frac{L_0\sqrt{T\log(1/\delta)}\log ^{3/2}(d_x)}{n\varepsilon}\\
        \textstyle\lesssim (L_0+L_1)\sqrt{\frac{\log(d_x)}{T}} +\frac{L_2}{\sqrt{T}\log(d_x)^{1/4}}  
        + \frac{L_1\log(\sqrt{n\log(d_x)})}{\sqrt{T\log(d_x)}}+\frac{L_0\sqrt{T\log(1/\delta)}\log ^{3/2}(d_x)}{n\varepsilon}.
    \end{multline*}
    Hence letting $T = \min\Big\{n,\frac{n\varepsilon}{\log(d_x)\sqrt{\log(1/\delta)}}\Big\}$ gives the claimed rate for second-order-smooth functions. Now, we look at the case where the partial derivates are not necessarily smooth. Now, we can bound $\mathbb{E}[F_\D(w^T) - F_\D(x)]$ by
    \begin{align*}
        &\frac{\log(d_x)}{\tau T} + \frac{\tau L_0^2}{2} + \frac{1}{T}\sum_{t \in [T]} \mathbb{E}\left[\langle \nabla F_\D(w^t) - g^t, x^t-x\rangle\right]  \\
        &\lesssim \frac{\log(d_x)}{\tau T} + \tau L_0^2 + \frac{1}{T}\sum_{t \in [T]} \bigg (\frac{L_1}{\sqrt{K}} + \mathbbm{1}_{(t > q)}\bigg[\frac{4L_1q}{q\lfloor t/q\rfloor + 1} \\
        & \qquad \qquad \qquad \qquad \qquad \qquad \qquad + \frac{L_1^2q\tau}{\sqrt{\log(d_x)}K^{3/2}} + \frac{(L_0^2 + L_1^2)\sqrt{\log(d_x)}q\tau}{\sqrt{K}}\bigg]\bigg) \\
        &\lesssim \frac{\log(d_x)}{\tau T} + \tau\Big[ L_0^2 + \frac{L_1^2q}{\sqrt{\log(d_x)}K^{3/2}} + \frac{(L_0^2 + L_1^2)\sqrt{\log(d_x)}q}{\sqrt{K}} \Big] \\
        & \qquad \qquad  + \frac{L_1}{\sqrt{K}} + \frac{1}{T}\sum_{t = q+1}^T \left[\frac{L_1q}{q\lfloor t/q\rfloor + 1}\right]\\
        &\lesssim \frac{\log(d_x)}{\tau T} + \tau\Big[ L_0^2 + \frac{L_1^2q}{\sqrt{\log(d_x)}K^{3/2}} + \frac{(L_0^2 + L_1^2)\sqrt{\log(d_x)}q}{\sqrt{K}} \Big]  + \frac{L_1}{\sqrt{K}} + \frac{L_1 q \log(T/q)}{T}.
    \end{align*}
    By Corollary \ref{cor:maurey_over_simplex}, the sparsification error is $\mathbb{E}[F_\D(\hat{w}^T) - F_\D(w^T)] \lesssim \frac{L_1}{K}$, which implies
    \begin{align*}
    \mathbb{E}[F_\D(\hat w^T) - F_\D(x)] \lesssim & \frac{\log(d_x)}{\tau T} + \tau\Big[ L_0^2 + \frac{L_1^2q}{\sqrt{\log(d_x)}K^{3/2}} + \frac{(L_0^2 + L_1^2)\sqrt{\log(d_x)}q}{\sqrt{K}} \Big] \\
    & \quad + \frac{L_1}{\sqrt{K}} + \frac{L_1 q \log(T/q)}{T}.
    \end{align*}    
    Setting $T = \min\left\{n,\frac{(n\varepsilon)^{4/5}}{(\log(d_x)\log(1/\delta))^{2/5}}\right\}$, $ q = \sqrt{T}/\log(d_x)$, $K = T/\log(d_x)$ and \\ $\tau \! \eqsim \! \!\min\!\left\{\sqrt{\frac{\log(d_x)}{(L_0^2 + (L_0^2+L_1^2)q\sqrt{\log(d_x)/K} + L_1^2q/[\sqrt{\log(d_x)}K^{3/2}])T}},  \frac{1}{L_0q}, \frac{n\varepsilon}{TL_0\sqrt{(TK/q + qK)\log(1/\delta)}}\right\}$ leads to the convergence rate claimed for first-order-smooth functions.  
\end{proof}

\section{Conclusion and Future Work} We presented mirror descent based algorithms for DP-SSP in the polyhedral setting. 
Our algorithms are the first achieving polylogarithmic-in-the-dimension error beyond bilinear objectives. In fact, for
second-order-smooth objectives our rates are optimal up to logarithmic factors. 
An important question remains: Are the optimal rates achievable in expectation for first-order-smooth
functions? We leave this question for future work. We also apply our ideas to DP-SCO in the polyhedral setting, where we also leave open the problem of achieving optimal rates with mirror descent based algorithms. 

\section{Acknowledgments} T.~Gonz\'alez's research was partially done as a Research Intern at Google DeepMind.  C.~Guzm\'an's research was partially done as a Visiting Researcher at Google. C.~Guzm\'an's research was partially supported by INRIA Associate Teams project, ANID FONDECYT 1210362 grant, ANID Anillo ACT210005 grant, and National Center for Artificial Intelligence CENIA FB210017, Basal ANID. C. Paquette is a CIFAR AI chair, (MILA) and C. Paquette was supported by a NSERC Discovery Grant, NSERC CREATE grant, an external grant from Google, and FRQNT New University Researcher's Start-Up Program.

\bibliographystyle{siamplain}
\bibliography{references}

\end{document}